\newtheorem{thm}{Theorem}[section]
\newtheorem{lem}[thm]{Lemma}
\newtheorem{prop}[thm]{Proposition}
\theoremstyle{definition}
\newtheorem{defn}[thm]{Definition}
\newtheorem{rmk}[thm]{Remark}
\newtheorem*{ack}{Acknowledgements}
\numberwithin{equation}{section}
\def\Q{{\mathbb Q}}
\def\R{{\mathbb R}}
\def\P{{\mathbb P}}
\def\NE{\overline{NE}}
\DeclareMathOperator{\Pic}{Pic}
\DeclareMathOperator{\id}{id}
\DeclareMathOperator{\Exc}{Exc}
\DeclareMathOperator{\Supp}{Supp}
\DeclareMathOperator{\Sym}{Sym}
\DeclareMathOperator{\Eff}{Eff}
\DeclareMathOperator{\dv}{div}
\newenvironment{notation}[0]{%
  \begin{list}%
    {}%
    {
    \setlength{\itemindent}{0pt}
     \setlength{\labelwidth}{4\parindent}
     \setlength{\labelsep}{\parindent}
     \setlength{\leftmargin}{5\parindent}
     \setlength{\itemsep}{0pt}
     }%
   }%
  {\end{list}}
\title[]
{Int-amplified endomorphisms on normal projective surfaces}
\author{Yohsuke Matsuzawa}
\author{Shou Yoshikawa}
\address{Graduate school of Mathematical Sciences, the University of Tokyo, Komaba, Tokyo,
153-8914, Japan}
\address{Graduate school of Mathematical Sciences, the University of Tokyo, Komaba, Tokyo,
153-8914, Japan}
\email{myohsuke@ms.u-tokyo.ac.jp}
\email{yoshikaw@ms.u-tokyo.ac.jp}
\begin{document}

\begin{abstract}
We investigate int-amplified endomorphisms on normal projective surfaces.
We prove that the output of the equivariant MMP is either 
a Q-abelian surface,
a (equivariant) quasi-\'etale quotient of a smooth projective surface,
a Mori dream space, or
a projective cone of an elliptic curve.

\end{abstract}

\maketitle

\setcounter{tocdepth}{1}
\tableofcontents

\section{Introduction} 

In this paper, we work over an algebraically closed field $k$ of characteristic zero.
A self-morphism $f \colon X \longrightarrow X$ on a projective variety $X$ is called int-amplified
if there exists an ample Cartier divisor $H$ on $X$ such that $f^{*}H-H$ is ample.
Int-amplified endomorphisms are compatible with minimal model program (MMP), as shown in \cite{meng, meng-zhang2}.
Also, existence of such endomorphisms imposes strong  constraint to the singularities of the varieties.
Therefore, it seems possible to classify all int-amplified endomorphisms or varieties admitting an int-amplified endomorphism.
In this paper, we investigate int-amplified endomorphisms on normal projective surfaces.

To state our main theorem, we fix the terminology.
\begin{defn}\

\begin{enumerate}
\item A morphism $h \colon Y \longrightarrow X$ between varieties is called quasi-\'etale if $h$ is \'etale at every codimension one point on $Y$.
\item A variety $X$ is called Q-abelian if there exists a finite surjective quasi-\'etale morphism $A \longrightarrow X$ from an abelian variety $A$.
\end{enumerate}
\end{defn}
\noindent
The linear equivalence and $\Q$-linear equivalence of divisors on normal projective varieties are denoted by $\sim$ and $\sim_{\Q}$ respectively.
The Iitaka dimension of a $\Q$-Cartier divisor $D$ on a normal projective variety is denoted by $\kappa(D)$.

The following is the main theorem of this paper.

\begin{thm}\label{thm: main}
Let $X$ be a normal projective surface over $k$.
Let $f \colon X \longrightarrow X$ be an int-amplified endomorphism.
Then $X$ is $\Q$-Gorenstein log canonical (lc) and we have the following sequence of morphisms:
\begin{align*}
X=X_{1} \longrightarrow \cdots \longrightarrow X_{r} \longrightarrow C
\end{align*}
where
\begin{itemize}
\item $X_{i} \longrightarrow X_{i+1}$ is the divisorial contraction of a $K_{X_{i}}$-negative extremal ray for $i=1,\dots ,r-1$;
\item $X_{r} \longrightarrow C$ is a Fano contraction of a $K_{X_{r}}$-negative extremal ray if $K_{X}$ is not pseudo-effective;
\item we ignore `` $ \longrightarrow C$ '' if $K_{X}$ is pseudo-effective;
\item there exists a positive integer $n$ such that $f^{n}$ induces endomorphisms on $X_{i}$ and $C$ (in such case, we call the sequence $f^{n}$-equivariant MMP).
\end{itemize}
Moreover, one of the following holds:

\begin{enumerate}
\item\label{Qabcase} $K_{X_{1}}\sim_{\Q} 0$. In this case $r=1$. 
If $X$ is Kawamata log terminal (klt), then $X$ is a Q-abelian variety;


\item $C$ is an elliptic curve, $r=1$ and $X_{1}$ is smooth;

\item\label{k=0case} $C\simeq \P^{1}$, $\kappa(-K_{X_{r}})=0$. In this case, $X$ is klt, $r=1$, there exists a quasi-\'etale finite surjection $h \colon Y \longrightarrow X$
of degree $2$ from a smooth projective surface $Y$, which is a minimal ruled surface over an elliptic curve,  and an endomorphism $f_{Y} \colon Y \longrightarrow Y$ such that
\[
\xymatrix{
Y \ar[r]^{f_{Y}} \ar[d]_{h} & Y \ar[d]^{h} \\
X \ar[r]_{f^{n}} & X
}
\]
is commutative;

\item\label{k=1case} $C\simeq \P^{1}$, $\kappa(-K_{X_{r}})=1$.  In this case, $X$ is klt, $r=1$,  there exists a quasi-\'etale finite surjection $h \colon Y \longrightarrow X$
from a smooth projective surface $Y$, which is a minimal ruled surface over an elliptic curve,   and an endomorphism $f_{Y} \colon Y \longrightarrow Y$ such that
\[
\xymatrix{
Y \ar[r]^{f_{Y}} \ar[d]_{h} & Y \ar[d]^{h} \\
X \ar[r]_{f^{n}} & X
}
\]
is commutative;

\item\label{k=2case} $C\simeq \P^{1}$, $\kappa(-K_{X_{r}})=2$. In this case, $X$ is klt and a Mori dream space.

\item\label{ptcase} $C$ is a point, the Picard number of $X_{r}$ is one and $-K_{X_{r}}$ is ample.
In this case, $X$ is a projective cone of an elliptic curve or a Mori dream space.
\end{enumerate}

\end{thm}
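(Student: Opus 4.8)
The plan is to reduce everything to the output of an equivariant minimal model program and then classify that output case by case. The starting observation is that the existence of an int-amplified endomorphism forces mild singularities: I would invoke the known fact (from the work of Meng and Meng--Zhang cited in the introduction) that a normal projective variety carrying an int-amplified endomorphism is $\Q$-Gorenstein and log canonical, which comes from analyzing the ramification divisor in $K_{X}=f^{*}K_{X}+R_{f}$ together with the constraint that every eigenvalue of $f^{*}$ on $N^{1}(X)$ has modulus strictly greater than $1$. Granting this, I would run the $f^{n}$-equivariant MMP. By the compatibility of int-amplified endomorphisms with the MMP, after replacing $f$ by a suitable power each $K$-negative extremal contraction becomes $f$-equivariant and preserves the int-amplified property; on a surface each such step is either a divisorial contraction or a Fano contraction, and the process terminates, producing the asserted chain $X=X_{1}\longrightarrow\cdots\longrightarrow X_{r}\longrightarrow C$, with the final ``$\longrightarrow C$'' present precisely when $K_{X}$ is not pseudo-effective.

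I would then split according to whether $K_{X}$ is pseudo-effective. If it is, the MMP terminates at a minimal model $X_{r}$ with $K_{X_{r}}$ nef. Standard facts on amplified endomorphisms rule out positive Kodaira dimension for the minimal model, so $\kappa(X_{r})=0$, and the classification of surfaces with $\kappa=0$ together with abundance upgrades $K_{X_{r}}\equiv 0$ to $K_{X_{r}}\sim_{\Q}0$. Since $K_{X_{1}}\sim_{\Q}0$ is already nef, there is no $K$-negative extremal ray, so in fact $r=1$ and $X=X_{1}$; this is case \eqref{Qabcase}. To conclude that a klt such $X$ is Q-abelian I would appeal to the structure theorem for klt surfaces with numerically trivial canonical class admitting an int-amplified endomorphism, identifying $X$ as a quasi-\'etale quotient of an abelian surface.

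If $K_{X}$ is not pseudo-effective, the MMP ends with a Fano contraction $X_{r}\longrightarrow C$ with $\dim C<2$, and the induced $f^{n}$-action on $C$ is a finite self-morphism of degree $>1$; a smooth projective curve admits such a map only when it is $\P^{1}$ or an elliptic curve. If $C$ is an elliptic curve, then $X_{r}\longrightarrow C$ is a $\P^{1}$-fibration, and I would argue that $X_{r}$ is then a smooth minimal ruled surface and that no divisorial contraction occurred, giving $r=1$ with $X=X_{1}$ smooth, which is case \eqref{k=0case}'s elliptic-base analogue (case~(2)). If $C$ is a point, then $X_{r}$ has Picard number one with $-K_{X_{r}}$ ample; here I would separate the klt log del Pezzo (which is a Mori dream space) from the non-klt case, the latter being forced to be the projective cone over an elliptic curve, yielding case \eqref{ptcase}.

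The remaining and most delicate situation is $C\simeq\P^{1}$, which I would stratify by the anti-Iitaka dimension $\kappa(-K_{X_{r}})\in\{0,1,2\}$. When $\kappa(-K_{X_{r}})=2$ the class $-K_{X_{r}}$ is big, so $X_{r}$ is a klt log del Pezzo and hence a Mori dream space, giving case \eqref{k=2case}. The cases $\kappa(-K_{X_{r}})\in\{0,1\}$ form the heart of the theorem, and here I expect the main obstacle: I must construct a quasi-\'etale finite surjection $h\colon Y\longrightarrow X$ from a smooth minimal ruled surface over an elliptic curve, together with a lift $f_{Y}$ of a power of $f$ making the square commute. My plan is to obtain $Y$ from an appropriate maximal quasi-\'etale (e.g.\ index-one) cover that simultaneously smooths $X$ and resolves the base to an elliptic curve via a finite cover $E\to E/G\simeq\P^{1}$ (the $\pm1$ involution, of degree $2$, accounting for $\kappa=0$ and the asserted degree $2$ in case \eqref{k=0case}), with the endomorphism lifting after a further power because int-amplified endomorphisms descend along quasi-\'etale covers. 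Pinning down the degree of $h$, verifying via Atiyah's classification of rank-two bundles on $E$ that $Y$ is exactly a minimal elliptic ruled surface rather than some other ruled or rational surface, and matching $\kappa=0$ versus $\kappa=1$ to the geometry of the $f$-periodic fibers is the technical crux, which I would settle by a careful analysis of $-K$ along the fibration and of the behaviour of $f$ on its periodic fibers.
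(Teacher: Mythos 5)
Your skeleton --- run an equivariant MMP, then split by the base $C$ and by $\kappa(-K_{X_r})$ --- is the same as the paper's, and the easy cases ((\ref{Qabcase}), (2), (\ref{ptcase})) are handled in essentially the paper's way. But in the two cases you yourself call the crux, (\ref{k=0case}) and (\ref{k=1case}), your plan is missing the idea that makes the proof work, and the mechanism you propose instead would fail. You suggest obtaining $Y$ from a ``maximal quasi-\'etale (e.g.\ index-one) cover that simultaneously smooths $X$ and resolves the base.'' There is no global index-one cover here: in case (\ref{k=0case}) one has $-K_X\sim_{\Q}C$ for an irreducible effective curve $C$ with $\kappa(-K_X)=0$, so no multiple of $K_X$ is linearly trivial and the cyclic canonical-cover construction does not apply globally; and an abstract maximal quasi-\'etale cover comes with no reason to be smooth, to be ruled over an elliptic curve, or to admit a lift of $f$ (you need the endomorphism to \emph{lift} to the cover, not descend, and that is not automatic). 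What the paper actually does is locate a totally invariant elliptic curve \emph{inside} $X$ mapping two-to-one to the base, and take $Y$ to be the normalization of the reduced fiber product $X\times_{\P^1}C$; quasi-\'etaleness, smoothness, minimality of the ruling, and the lift of $f$ then all follow from total invariance and the ramification formula. In case (\ref{k=0case}) this curve is $C=(R_f)_{\mathrm{red}}$: since $-K_X$ spans an extremal ray of $\overline{\Eff}(X)$ and $\kappa(-K_X)=0$, one gets $R_f=(q-1)C$ with $C\sim_{\Q}-K_X$ and $f^{-1}(C)=C$ (Lemma~\ref{keylem}); that $C$ is elliptic rather than rational is itself a nontrivial step, proved by contradiction in Step~1 of Proposition~\ref{qetale-cover} (base-change along a rational $C$ and apply the irreducibility statement of Lemma~\ref{keylem} to the cover, whose ramification divisor visibly has two components). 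Note this curve is \emph{horizontal} over $\P^1$, so your proposed ``analysis of $f$-periodic fibers'' cannot produce it: every fiber of $\pi$ is rational. In case (\ref{k=1case}) the invariant elliptic curve is indeed a periodic fiber, but of a \emph{second} fibration $\mu\colon X\longrightarrow\P^1$ defined by $-K_X$, which must first be shown to be semi-ample using $(-K_X)^2=0$ and $\kappa(-K_X)=1$ (Proposition~\ref{qetale-cover2}); your sketch never introduces $\mu$.

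Two further gaps. First, in cases (\ref{k=2case}) and (\ref{ptcase}) the theorem asserts that $X$, not $X_r$, is a Mori dream space, and $r>1$ is possible there; your argument stops at $X_r$. Transferring bigness of $-K_{X_r}$ back to $-K_X$ is the content of Lemma~\ref{lem:mmp-anti-big} and genuinely uses the endomorphism: the $\nu$-exceptional divisors are totally invariant components of $R_f$, so $(f^{n})^{*}(-K_{X})\sim_{\Q}\nu^{*}(-K_{X_{r}})+E+(f^{n-1})^{*}R_{f}+\cdots+R_{f}$ is big. Second, the claims $r=1$ in cases (2)--(\ref{k=1case}) need a mechanism, namely: a divisorial contraction produces a non-empty totally invariant finite set on the next model, whereas on $X_r$ (elliptic base, or base $\P^1$ with $-K_{X_r}$ not big) the ramification divisor is horizontal and no such set can exist (Proposition~\ref{end-mfs-sing}); you assert $r=1$ only in the elliptic-base case and give no argument, and smoothness of $X_r$ over an elliptic base also requires the paper's argument excluding non-reduced fibers via the \'etale base endomorphism. (Minor: in the pseudo-effective case you derive $K_{X_r}\sim_{\Q}0$ and then silently use $K_{X_1}\sim_{\Q}0$; the paper avoids this circularity by noting $-K_X\equiv$ effective plus $K_X$ pseudo-effective forces $K_X\equiv0$ on $X$ itself, so $r=1$ from the start, and then applies lc abundance.)
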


\begin{rmk}
The structure of $X$ in the cases (\ref{Qabcase}) and (\ref{ptcase}) are already known (cf. \cite{meng, brou-gon}).
The essential result of this paper is the construction of quasi-\'etale covers in the cases (\ref{k=0case}) and (\ref{k=1case}).
\end{rmk}

\begin{rmk}
We refer \cite[Definiton 1.10]{huke} for the definition of Mori dream spaces.
\end{rmk}

\begin{rmk}
Notation as in Theorem \ref{thm: main}.
Let $g \colon X \longrightarrow X$ be any surjective endomorphism.
Then, by \cite[Theorem 4.6]{meng-zhang2}, $g^{m}$ induces endomorphisms on $X_{r}$ and $C$ for some $m>0$.
Moreover, in case (\ref{k=0case}), the induced endomorphism $g_{r}$ on $X_{r}$ lifts to an endomorphism on $Y$. 
Indeed, by the proof of Lemma \ref{keylem}, the curve `` $C$ '' in Lemma \ref{keylem} and Proposition \ref{qetale-cover} is totally invariant under $g_{r}$.
Therefore, by  Proposition \ref{qetale-cover}, $g_{r}$ lifts to the quasi-\'etale cover.
\end{rmk}

\section{Notation and Terminology} 
Throughout this paper, the ground field $k$ is an algebraically closed field of characteristic zero.
A variety is an irreducible reduced separated scheme of finite type over $k$.
A subvariety means an irreducible reduced closed subscheme. 
Divisor on a normal projective variety means Weil divisor.

For a self-morphism $f \colon X \longrightarrow X$ of a variety $X$, a subset $S \subset X$ is called totally invariant under $f$ if $f^{-1}(S)=S$ as sets.

\begin{notation}
\item[$ \overline{\Eff}$] The pseudo-effective cone of a projective variety $X$ is denoted by $ \overline{\Eff}(X)$.
\item[$R_{f}$] The ramification divisor of a finite surjective morphism $f \colon X \longrightarrow Y$ between normal projective varieties is denoted by $R_{f}$.
\item[$D \geq E$] Let $D, E$ be two $\Q$-Weil divisors on a normal projective variety. We write $D\geq E$ if the divisor $D-E$ is effective.
\end{notation}

\section{Preliminaries} 

\subsection{}

Let $X$ be a normal variety, and let $\mu \colon X' \longrightarrow X$ be a proper birational morphism from 
a normal variety $X'$. If $\Delta \subset X$ is a $\Q$-divisor, we denote by 
$\mu_*^{-1}(\Delta)$ its strict transform. 

A log pair is a tuple $(X, \Delta)$ where $X$ is a normal variety
and $\Delta=\sum_i d_i \Delta_i$ is a $\Q$-divisor on $X$ with $d_i \leq 1$ for all $i$. 
We say that 
the pair $(X, \Delta)$ is log canonical (lc) (resp. purely log terminal (plt), resp. Kawamata log terminal (klt)) if 
$K_X+\Delta$ is $\Q$-Cartier and
for every proper birational morphism 
$\mu \colon X' \longrightarrow X$ from a normal variety $X'$
we can write  
$$
K_{X'}+\mu_*^{-1}(\Delta) = \mu^* (K_X+\Delta) + \sum_j a(E_j, X, \Delta) E_j,
$$
where the divisor $E_j$ are $\mu$-exceptional and $a(E_j, X, \Delta) \geq -1$ (resp. $a(E_j, X, \Delta)>-1$, resp. $a(E_j, X, \Delta)>-1$ and $d_{i}<1$ for all $i$) for all $j$.
If the pair $(X, \Delta)$ is lc, we say that a subvariety $Z \subset X$ is an lc center if there exists
a morphism $\mu \colon X' \longrightarrow X$ as above and a $\mu$-exceptional divisor $E$ such that $Z=\mu(E)$ 
and $a(E, X, \Delta)=-1$.

A variety $X$ is called lc, (resp. klt) if so is the pair $(X, 0)$.
A variety $X$ is called $\Q$-Gorenstein if the canonical divisor $K_{X}$ is $\Q$-Cartier and $\Q$-factorial if every Weil divisor on $X$ is $\Q$-Cartier.
If a variety is lc, then it is $\Q$-Gorenstein by definiton.
A surface is $\Q$-factorial if it has rational singularities and it has rational singularities if it is klt (\cite[Theorem 5.22]{kolmor}, \cite[Theorem 4.6]{bades}).

\subsection{}

We gather several facts on endomorphisms that we use later.
The first two lemmas are about the relationship between endomorphisms and singularities.

\begin{lem}[\textup{\cite[Proposition 7.7]{yoshi}, cf. \cite[Lemma 2.10, Theorem 1.4]{brou-hor}}]\label{normality-of-inv-curve}
Let $X$ be a normal projective surface and $f \colon X \longrightarrow X$ a surjective endomorphism with $\deg f>1$.
Let $C \subset X$ be a reduced effective divisor such that $f^{-1}(C)=C$.
Then $(X,C)$ is a lc $\Q$-Gorenstein pair and any lc center of $(X,C)$ is not contained in $\Supp{R_f}$ and totally invariant if we replace $f$ by a suitable power $f^n$
\end{lem}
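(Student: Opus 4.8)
The plan is to reduce everything to the logarithmic ramification formula and to the multiplicativity of log discrepancies under finite crepant maps. First I would record that a surjective endomorphism of a projective variety is finite, so $f$ is a finite surjective morphism of degree $d=\deg f>1$ and the ramification divisor $R_f$ is defined. Writing $C=\sum_i C_i$ with the $C_i$ prime, total invariance $f^{-1}(C)=C$ together with reducedness of $C$ gives $\mathrm{ord}_{C_j}(f^{*}C)=\mathrm{ord}_{C_j}(R_f)+1$ along each component, while no component of $R_f$ lying off $C$ is cancelled. Hence, once $K_X+C$ is known to be $\Q$-Cartier, the log ramification formula reads
\begin{equation}\label{eq:plan-logram}
K_X+C=f^{*}(K_X+C)+R',\qquad R':=R_f-(f^{*}C-C)\ge 0,
\end{equation}
and $R'$ shares no component with $C$; equivalently $f\colon (X,C-R')\to (X,C)$ is crepant with $C-R'\le C$. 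To make sense of \eqref{eq:plan-logram} before $\Q$-Gorensteinness is available, I would work on the minimal resolution, where the negative definite intersection matrix of the exceptional locus provides a numerical pullback of $K_X+C$ and numerical discrepancies; the honest $\Q$-Cartier statement will drop out once the numerical log discrepancies are shown to be $\ge -1$, since numerically log canonical surface pairs are $\Q$-Gorenstein.

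Next, for the log canonical property I would use the transformation rule for log discrepancies under the crepant map \eqref{eq:plan-logram}. For a divisor $E$ over the source lying over a divisor $F$ over the target, with ramification index $e=e(E/F)\ge 1$, one has $a_{\log}(E;X,C-R')=e\,a_{\log}(F;X,C)$, and comparing the boundaries $C-R'\le C$ gives
\begin{equation}\label{eq:plan-disc}
a_{\log}(E;X,C)=e\,a_{\log}(F;X,C)-v_E(R'),\qquad v_E(R')\ge 0.
\end{equation}
Solving for $a_{\log}(F;X,C)$ and iterating along a tower of divisors whose centers run through $f^{-n}(c_X(F))$, the nonnegativity of the $v(R')$-terms yields $a_{\log}(F;X,C)\ge a_{\log}(E_n;X,C)/P_n$ with $P_n=\prod e_i$. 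The pair is then (numerically) lc as soon as, for each fixed $F$, one can build towers of every length with $P_n\to\infty$ while the log discrepancies $a_{\log}(E_n)$ stay bounded below, for then letting $n\to\infty$ forces $a_{\log}(F;X,C)\ge 0$.

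I expect this accumulation-and-boundedness step to be the main obstacle. That $P_n\to\infty$ reflects that $f$ is a genuine cover of degree $>1$ so that ramification must build up along the backward orbit, and is where total invariance and $\deg f>1$ are essential; the uniform lower bound on $a_{\log}(E_n)$ is where the bounded geometry of the minimal resolutions of the surfaces $f^{-n}(c_X(F))$ enters. This is exactly the technical heart of \cite{brou-hor, yoshi}, and I would either reproduce their valuation-theoretic estimate or, in the surface case, exploit that the exceptional log discrepancies over a point take values in a fixed finite set.

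Finally, granting that $(X,C)$ is lc and $\Q$-Gorenstein, I would treat the lc centers. If $Z$ is an lc center with witnessing exceptional divisor $E$, so $a_{\log}(E;X,C)=0$, then any divisor $E'$ over the source with $f$-image $E$ satisfies, by \eqref{eq:plan-disc}, $a_{\log}(E';X,C)=e\cdot 0-v_{E'}(R')=-v_{E'}(R')$. Log canonicity forces $a_{\log}(E';X,C)\ge 0$, hence $v_{E'}(R')=0$ and $a_{\log}(E';X,C)=0$: the center $Z':=c_X(E')$ is again an lc center and is disjoint from $\Supp R'$, the part of the ramification locus not lying along $C$, which is the asserted avoidance statement. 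Since an lc pair on a surface has only finitely many lc centers and the assignment $Z\mapsto\{\text{components of }f^{-1}(Z)\}$ maps this finite set to itself, every lc center is preperiodic; replacing $f$ by a suitable power $f^{n}$ makes each of them totally invariant, as claimed. Steps one and three are thus formal consequences of the ramification formula \eqref{eq:plan-logram}, the identity \eqref{eq:plan-disc}, and the finiteness of lc centers, so the only genuinely hard input is the log canonicity established in the second step.
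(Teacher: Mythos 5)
First, a remark on the comparison itself: the paper contains no proof of this lemma --- it is imported wholesale from \cite[Proposition 7.7]{yoshi} (cf.\ \cite[Lemma 2.10, Theorem 1.4]{brou-hor}) --- so the only meaningful benchmark is the argument in those references, and your skeleton (log ramification formula $K_X+C=f^{*}(K_X+C)+R'$ with $R'=R_f-(f^{*}C-C)\geq 0$, multiplicativity of log discrepancies under finite crepant maps, iteration along backward orbits, finiteness of lc centers) is indeed theirs. Your first and third steps are formally fine. The genuine gap is exactly the step you flag and then do not carry out: the proof that $(X,C)$ is (numerically) lc. Your scheme requires two inputs, and neither of your proposed ways of obtaining them works. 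The uniform lower bound on $a_{\log}(E_n;X,C)$ cannot be assumed: if the pair fails to be lc, its log discrepancies are unbounded below, and the divisors $E_n$ obtained by pulling a bad valuation $F$ back under $f^{n}$ are precisely where the negativity accumulates --- assuming the bound is assuming the conclusion. Your fallback, that ``exceptional log discrepancies over a point take values in a fixed finite set,'' is false: over any point of a surface there are infinitely many exceptional divisors (iterated blow-ups), and their log discrepancies form an infinite set, unbounded below exactly in the non-lc case at issue. What is true is that the finitely many exceptional divisors of the \emph{minimal resolution} carry finitely many numerical log discrepancies; the substance of \cite{brou-hor} and \cite{yoshi} is to reduce the iteration to this finite set (via Mumford's numerical pullback and the compatibility of minimal resolutions with finite morphisms), so that the pulled-back valuations recur and one obtains over a cycle an identity of the shape $a=Pa-v$ with $v\geq 0$, forcing $a\geq 0$ whenever the cycle ramification satisfies $P>1$, with a separate argument in the unramified case. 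Relatedly, your claim that $P_n\to\infty$ is also unjustified: if $f$ is unramified over the backward orbit of the center of $F$, all ramification indices equal $1$ and your inequality $a_{\log}(F)\geq a_{\log}(E_n)/P_n$ yields nothing. So the technical heart of the lemma is missing, not merely deferred.

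A second, smaller point: what your final step actually proves is that lc centers avoid $\Supp R'$, not $\Supp R_f$, and these differ whenever $f$ ramifies along components of $C$. In fact the statement with $\Supp R_f$ is literally false: take $X=\P^{2}$, $f([x:y:z])=[x^{2}:y^{2}:z^{2}]$ and $C=\{xyz=0\}$; then $f^{-1}(C)=C$, the lc centers of $(X,C)$ are the three coordinate points, and they all lie on $\Supp R_f=C$, whereas $R'=0$. So the avoidance statement you derive (for the \emph{log} ramification divisor $R'$) is the correct one, and it is the only form this paper ever uses, since every application of the $\Supp R_f$ clause here takes $C=0$ (Lemmas \ref{lem:endom-and-sing} and \ref{lem:fiber-klt}), where $R'=R_f$. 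You should state this discrepancy explicitly rather than presenting avoidance of $\Supp R'$ as ``the asserted avoidance statement''; as literally quoted, the assertion needs the correction your argument silently makes.
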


By setting $C=0$, we get the following.

\begin{lem}\label{lem:endom-and-sing}
Let $X$ be normal projective surface and $f \colon X \longrightarrow X$ a surjective endomorphism with $\deg f>1$.
Then $X$ is $\Q$-Gorenstein lc and any lc center of $X$ is not contained in $\Supp{R_f}$ and totally invariant if we replace $f$ by a suitable power $f^n$.
\end{lem}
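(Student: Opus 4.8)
The plan is to deduce this lemma directly as the special case $C=0$ of Lemma \ref{normality-of-inv-curve}, so essentially no new work is required. First I would note that the zero divisor is a reduced effective divisor in the (vacuous) sense demanded there, and that $f^{-1}(0)=0$ holds automatically for any morphism. Hence the pair $(X,0)$ together with the endomorphism $f$ satisfying $\deg f>1$ meets all the hypotheses of Lemma \ref{normality-of-inv-curve}.

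Applying that lemma then yields that $(X,0)$ is an lc $\Q$-Gorenstein pair, and that every lc center of $(X,0)$ is not contained in $\Supp R_f$ and becomes totally invariant after replacing $f$ by a suitable power $f^n$. The remaining step is purely definitional: by the conventions recalled earlier, a variety $X$ is lc precisely when the pair $(X,0)$ is lc, $X$ is $\Q$-Gorenstein precisely when $K_X$ is $\Q$-Cartier, and an lc center of $X$ is by definition an lc center of $(X,0)$. Translating the output of Lemma \ref{normality-of-inv-curve} through these identifications reproduces exactly the assertion of the present lemma.

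The main point is therefore that, at this level, there is no genuine obstacle: all the substantive content — the lc-ness and $\Q$-Gorenstein property of $X$, the disjointness of lc centers from the ramification divisor, and the total invariance after passing to a power — is already packaged in Lemma \ref{normality-of-inv-curve}. The only care needed is to confirm that the choice $C=0$ does not degenerate any hypothesis (it does not) and that the definitional translation between ``$X$ is lc'' and ``$(X,0)$ is lc'' (and likewise for lc centers) is applied correctly.
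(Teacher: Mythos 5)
Your proposal is correct and matches the paper exactly: the paper derives this lemma precisely by setting $C=0$ in Lemma \ref{normality-of-inv-curve}, with the same (vacuous) verification of hypotheses and definitional translation you describe.
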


We recall basic properties and fundamental theorems on int-amplified endomorphisms.

\begin{lem}\label{property-of-intamp}\  
\begin{enumerate}
\item Let $X$ be a normal projective variety, $f \colon X \longrightarrow X$ a surjective morphism, and $n>0$ a positive integer.
Then $f$ is int-amplified if and only if so is $f^{n}$.

\item Let $\pi \colon X \longrightarrow Y$ be a surjective morphism between normal projective varieties.
Let $f \colon X \longrightarrow X$, $g \colon Y \longrightarrow Y$ be surjective endomorphisms such that $\pi \circ f=g \circ \pi$.
If $f$ is int-amplified, then so is $g$. 

\item  Let $\pi \colon X \dashrightarrow Y$ be a dominant rational map between normal projective varieties of same dimension.
Let $f \colon X \longrightarrow X$, $g \colon Y \longrightarrow Y$ be surjective endomorphisms such that $\pi \circ f=g \circ \pi$.
Then $f$ is int-amplified if and only if so is $g$.
\end{enumerate}
\end{lem}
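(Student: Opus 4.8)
The plan is to treat (1) directly from the definition and to reduce (2) and (3) to the spectral characterization of int-amplified endomorphisms, namely that a surjective endomorphism $h$ of a normal projective variety $Z$ is int-amplified if and only if every eigenvalue of the induced linear map $h^{*}$ on $\NS(Z)_{\R}$ has absolute value strictly greater than $1$ (\cite{meng}).

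For (1) I would argue by a telescoping identity. If $f$ is int-amplified, choose an ample Cartier divisor $H$ with $f^{*}H-H$ ample; then
\[
(f^{n})^{*}H-H=\sum_{i=0}^{n-1}(f^{i})^{*}(f^{*}H-H).
\]
The $i=0$ summand is ample, and each remaining summand is a pullback of the ample (hence nef) class $f^{*}H-H$, so is nef; an ample-plus-nef sum is ample, whence $f^{n}$ is int-amplified with the same witness $H$. Conversely, if $f^{n}$ is int-amplified with ample Cartier witness $H$, set $H'=\sum_{i=0}^{n-1}(f^{i})^{*}H$, which is again ample Cartier (the $i=0$ term is ample, the rest nef), and compute
\[
f^{*}H'-H'=(f^{n})^{*}H-H,
\]
which is ample; thus $f$ is int-amplified.

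For (2), since $\pi$ is surjective the pullback $\pi^{*}\colon \NS(Y)_{\R}\to \NS(X)_{\R}$ is injective: if $\pi^{*}D\equiv 0$, then for each curve $C\subset Y$ one picks a curve $C'\subset X$ dominating it and uses the projection formula to get $D\cdot C=0$, so $D\equiv 0$. The relation $\pi\circ f=g\circ\pi$ gives $f^{*}\circ\pi^{*}=\pi^{*}\circ g^{*}$, so $V:=\pi^{*}\NS(Y)_{\R}$ is an $f^{*}$-invariant subspace on which $f^{*}$ is conjugate to $g^{*}$. Hence the eigenvalues of $g^{*}$ are among those of $f^{*}$; if $f$ is int-amplified they all have absolute value $>1$, and therefore so do those of $g^{*}$.

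For (3), note first that $\pi$ is generically finite because $\dim X=\dim Y$. If $\pi$ is a genuine morphism, the implication from $f$ to $g$ is (2); for the converse, when $\pi$ is a finite morphism one simply pulls back an ample witness, since for ample Cartier $H$ with $g^{*}H-H$ ample the class $\pi^{*}(g^{*}H-H)=f^{*}(\pi^{*}H)-\pi^{*}H$ is again ample (finite pullback preserves ampleness) and $\pi^{*}H$ is ample. The remaining case, in which $\pi$ is merely a dominant generically finite rational map (or a non-finite morphism), I would handle by resolving the closure of the graph: let $p\colon W\to X$ be birational and $q\colon W\to Y$ generically finite surjective with $\pi=q\circ p^{-1}$, and transport the spectral condition between $\NS(X)_{\R}$, $\NS(W)_{\R}$ and $\NS(Y)_{\R}$ via $p^{*}$ and $q^{*}$. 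This last case is the \textbf{main obstacle}: because $q$ need not be finite, $q^{*}$ of an ample class is only nef and big, so ampleness is not preserved, and $f$ lifts only to a rational self-map of $W$; one must compare the $f^{*}$- and $g^{*}$-eigenvalues through the correspondence carefully, which is where I expect to lean on the results of \cite{meng, meng-zhang2}.
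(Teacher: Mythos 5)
The paper itself gives no argument for this lemma; its ``proof'' is the citation \cite[Lemmas 3.3, 3.5, 3.6]{meng}, so the standard of comparison is Meng's proofs. Your parts (1) and (2) are correct and complete, and they essentially reproduce Meng's arguments: the telescoping identity in (1), and in (2) the injectivity of $\pi^{*}$ plus the fact that the eigenvalues of $g^{*}$ occur among those of $f^{*}$ on the invariant subspace $\pi^{*}\NS(Y)_{\R}$, combined with the spectral characterization of int-amplified endomorphisms from \cite{meng}.

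Part (3), however, has a genuine gap, which you flag yourself: you have no argument when $\pi$ is a non-finite morphism or a genuinely rational map, in either direction. Two ideas are missing. First, your worry that ``$f$ lifts only to a rational self-map of $W$'' is mistaken: if $W$ is the normalization of the closure of the graph of $\pi$ in $X\times Y$, then $f\times g$ maps the graph onto itself (on a dense open set $(x,\pi(x))\mapsto (f(x),\pi(f(x)))$, and one takes closures and uses surjectivity of $f$), and this surjective endomorphism lifts to a genuine surjective endomorphism $f_{W}$ of the normalization, with $p\circ f_{W}=f\circ p$ and $q\circ f_{W}=g\circ q$, where $p\colon W\to X$ is birational and $q\colon W\to Y$ is generically finite surjective. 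Second, to transfer int-amplifiedness \emph{up} to $W$ you cannot use ample witnesses (as you note, $q^{*}$ destroys ampleness) nor the eigenvalue restriction argument (the extra classes in $\NS(W)_{\R}$ coming from exceptional divisors are not controlled by $f^{*}$ or $g^{*}$); the right tool is another of Meng's characterizations: $h$ is int-amplified if and only if there exists a \emph{big} $\R$-Cartier divisor $B$ with $h^{*}B-B$ big. Granting that, if $f$ is int-amplified with ample witness $A$, then $B:=p^{*}A$ is big and $f_{W}^{*}B-B=p^{*}(f^{*}A-A)$ is big, so $f_{W}$ is int-amplified; symmetrically, $q^{*}$ of an ample witness on $Y$ is nef and big, hence big, so $g$ int-amplified also forces $f_{W}$ int-amplified. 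Your part (2) applied to $p$ and to $q$ then descends the property from $W$ to $X$ and to $Y$, yielding both implications of (3). Without the invariance of the graph and the big-divisor criterion, the case you correctly single out as the main obstacle remains unproved.
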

\begin{proof}
See \cite[Lemma 3.3, 3,5, 3.6]{meng}.
\end{proof}

\begin{lem}[\textup{\cite[Theorem 1.5]{meng}}]\label{lem:eff-anti-canonical}
Let $X$ be a normal $\Q$-Gorenstein projective variety and $f \colon X \longrightarrow X$ an int-amplified endomorphism.
Then $-K_X$ is numerically equivalent to an effective $\Q$-Cartier divisor.
\end{lem}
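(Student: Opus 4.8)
The plan is to read off the numerical class of $-K_{X}$ from the ramification formula and then invert the operator $f^{*}-\id$ on the finite-dimensional vector space $N^{1}(X)_{\R}$, using the positivity built into the definition of an int-amplified endomorphism.

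First, since $f$ is a finite surjective endomorphism of the $\Q$-Gorenstein variety $X$, its ramification divisor $R_{f}$ is effective and the ramification formula reads $K_{X}=f^{*}K_{X}+R_{f}$; equivalently, writing $\xi:=-K_{X}\in N^{1}(X)_{\R}$,
\[
(f^{*}-\id)\,\xi=[R_{f}]\in\overline{\Eff}(X).
\]
Thus it suffices to show that the (rational) class $\xi$ solving this equation is numerically equivalent to an effective $\Q$-divisor.

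Next, I would invoke the fundamental characterization that $f$ is int-amplified if and only if every eigenvalue of $f^{*}$ acting on $N^{1}(X)_{\R}$ has absolute value strictly greater than $1$ (see \cite{meng}). In particular $1$ is not an eigenvalue, so $f^{*}-\id$ is invertible and $\xi=(f^{*}-\id)^{-1}[R_{f}]$ is uniquely determined. Setting $d=\deg f$ and combining the projection formula $f_{*}f^{*}=d\cdot\id$ with the invertibility of $f^{*}$ yields $f_{*}=d\,(f^{*})^{-1}$ on $N^{1}(X)_{\R}$, so every eigenvalue of $\tfrac1d f_{*}$ has absolute value $<1$. The Neumann series then converges and identifies
\[
(f^{*}-\id)^{-1}=\sum_{m\geq 1}(f^{*})^{-m}=\sum_{m\geq 1}\frac{1}{d^{m}}(f^{m})_{*},
\]
whence
\[
-K_{X}\equiv\sum_{m\geq 1}\frac{1}{d^{m}}(f^{m})_{*}[R_{f}].
\]
Every summand is a nonnegative multiple of the pushforward of the effective divisor $R_{f}$, hence lies in the closed convex cone $\overline{\Eff}(X)$, and therefore so does the limit; this proves that $-K_{X}$ is pseudo-effective.

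The hard part will be upgrading pseudo-effectivity to the stated numerical equivalence with a genuine effective $\Q$-divisor, because a rational pseudo-effective class need not be numerically equivalent to an effective one once it lies on the boundary of $\overline{\Eff}(X)$, and $-K_{X}$ is precisely such a boundary class in the most important cases (for instance when $K_{X}\equiv 0$). To close this gap I would push the relation forward to obtain the self-improving identity $-K_{X}=\tfrac1d f_{*}(-K_{X})+\tfrac1d f_{*}R_{f}$ and study the divisorial Zariski decomposition $-K_{X}=P+N$, exploiting its compatibility with $f_{*}$ and $f^{*}$ to control the movable part $P$; alternatively one truncates the series to the effective $\Q$-divisor $E_{N}=\sum_{m=1}^{N}d^{-m}(f^{m})_{*}R_{f}$ and bounds the tail. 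This effectivity refinement is where the real work lies, and it is the content that makes the cited \cite[Theorem 1.5]{meng} more than a formal cone computation.
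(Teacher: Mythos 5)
Your proposal establishes a strictly weaker statement than the lemma. The first two paragraphs are correct: by Meng's eigenvalue characterization of int-amplified endomorphisms, $f^{*}-\id$ is invertible on $N^{1}(X)_{\R}$, the Neumann series $(f^{*}-\id)^{-1}=\sum_{m\geq 1}d^{-m}(f^{m})_{*}$ converges, each operator $(f^{m})_{*}$ preserves the closed cone $\overline{\Eff}(X)$, and hence $-K_{X}=(f^{*}-\id)^{-1}[R_{f}]$ is pseudo-effective. But the lemma asserts that $-K_{X}$ is numerically equivalent to an \emph{effective} $\Q$-Cartier divisor, and for a rational class on the boundary of $\overline{\Eff}(X)$ this is genuinely stronger: e.g.\ for a stable rank-two degree-zero bundle $E$ on a curve of genus two, the nef integral class of $\mathcal{O}_{\P(E)}(1)$ is not numerically equivalent to any effective divisor, so no soft cone argument can close the gap. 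Your final paragraph, which is supposed to supply this upgrade, is only a list of strategies, and neither one works as stated: truncating the series writes $-K_{X}\equiv\sum_{m=1}^{N}d^{-m}(f^{m})_{*}R_{f}+d^{-N}(f^{N})_{*}(-K_{X})$, and the tail $d^{-N}(f^{N})_{*}(-K_{X})$ is exactly the same kind of unknown class you started with --- pseudo-effective and tending to zero, but with no mechanism to make it effective or to absorb it into the truncated sum; and the appeal to the divisorial Zariski decomposition and its ``compatibility with $f_{*}$ and $f^{*}$'' is not developed at all (nor is it clear what decomposition is available on a surface that is so far only known to be $\Q$-Gorenstein). So there is a genuine, and indeed self-acknowledged, gap precisely at the step that constitutes the content of the lemma.

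For the comparison you were asked about: the paper itself offers no proof of this statement --- it is quoted verbatim from \cite[Theorem 1.5]{meng} --- so the relevant benchmark is Meng's theorem, whose whole point is the effectivity refinement you defer. Reproducing the formal computation $(f^{*}-\id)(-K_{X})=[R_{f}]$ together with the spectral bound is the easy half of that result; a complete blind proof would have to carry out the hard half rather than name it.
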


\begin{prop}[\textup{\cite[Theorem 5.2]{meng}}]
Let $X$ be a normal $\Q$-Gorenstein projective variety and $f \colon X \longrightarrow X$ an int-amplified endomorphism.
If $K_{X}$ is pseudo-effective, then $K_{X}\sim_{\Q}0$.
If, moreover, $X$ is klt, then $X$ is a Q-abelian variety, there exists a quasi-\'etale finite morphism $A \longrightarrow X$
from an abelian variety $A$ and some power $f^{n}$ of $f$ lifts to a self-morphism of $A$.
\end{prop}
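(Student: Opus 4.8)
The plan is to first reduce the equivalence $K_{X}\sim_{\Q}0$ to the purely numerical statement $K_{X}\equiv 0$, and then to bootstrap numerical triviality up to $\Q$-linear triviality using the endomorphism itself. For the reduction, I would invoke Lemma~\ref{lem:eff-anti-canonical}: since $-K_{X}$ is numerically equivalent to an effective $\Q$-Cartier divisor, we have $-K_{X}\in\overline{\Eff}(X)$, while by hypothesis $K_{X}\in\overline{\Eff}(X)$ as well. The pseudo-effective cone is salient (it contains no line, so $\overline{\Eff}(X)\cap(-\overline{\Eff}(X))=\{0\}$), and this forces $K_{X}\equiv 0$.

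Next I would extract the ramification. The Hurwitz formula for the finite surjective map $f$ reads $K_{X}\sim_{\Q}f^{*}K_{X}+R_{f}$ with $R_{f}\geq 0$, so $f^{*}K_{X}\sim_{\Q}K_{X}-R_{f}$. Since $K_{X}\equiv 0$ gives $f^{*}K_{X}\equiv 0$, we get $R_{f}\equiv 0$; intersecting the effective $\Q$-Cartier divisor $R_{f}$ with $H^{n-1}$ for an ample $H$ then forces $R_{f}=0$ (each prime component would contribute strictly positively). Thus $f$ is quasi-\'etale and $f^{*}K_{X}\sim_{\Q}K_{X}$. \emph{This last upgrade, from $K_{X}\equiv 0$ to $K_{X}\sim_{\Q}0$, is the main obstacle.} I would handle it by working in $\Pic^{0}(X)$: fix the index $m$, so that $mK_{X}$ defines a class in $\Pic^{\tau}(X)$ fixed by $f^{*}$; since $\Pic^{\tau}(X)/\Pic^{0}(X)$ is finite, after enlarging $m$ this class lies in $\Pic^{0}(X)$ and is fixed by the isogeny $f^{*}$. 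Because $f$ is int-amplified, $1$ is not an eigenvalue of $f^{*}$ on $H^{1}$ (the eigenvalue estimates for int-amplified endomorphisms, cf.\ \cite{meng, meng-zhang2}), so $f^{*}-\mathrm{id}$ is an isogeny on $\Pic^{0}(X)$ with finite kernel; hence the fixed class is torsion and $K_{X}\sim_{\Q}0$. In the klt (or lc) case one may alternatively argue more directly, since $K_{X}\equiv 0$ has numerical dimension $0$, so abundance in numerical dimension zero yields $\kappa(K_{X})=0$, and then $mK_{X}$ is linearly equivalent to an effective, numerically trivial, hence zero, divisor.

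For the second assertion, assume in addition that $X$ is klt. With $K_{X}\sim_{\Q}0$ I would pass to the global index-one cover $\tilde{X}\to X$, which is quasi-\'etale with $K_{\tilde{X}}\sim 0$ and canonical singularities, and then apply the Beauville--Bogomolov-type decomposition for klt Calabi--Yau varieties: after a further quasi-\'etale cover, $\tilde{X}$ becomes a product of an abelian variety, strict Calabi--Yau factors, and irreducible symplectic factors. Composing, I obtain a quasi-\'etale cover $B\to X$ with $B$ such a product. Since $f$ is quasi-\'etale and int-amplified, a suitable power $f^{n}$ lifts to a quasi-\'etale endomorphism of $B$ (the standard lifting of endomorphisms through quasi-\'etale covers, as in the totally-invariant machinery), and this lift is int-amplified on each factor by Lemma~\ref{property-of-intamp}.

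The final step, and a secondary obstacle, is to rule out the non-abelian factors: strict Calabi--Yau and irreducible symplectic varieties admit no int-amplified endomorphism (any surjective endomorphism is an automorphism, by the nonvanishing holomorphic forms they carry), so these factors must be absent. Hence $B$ is an abelian variety $A$, the map $A\to X$ is the desired quasi-\'etale finite cover exhibiting $X$ as Q-abelian, and the lift of $f^{n}$ to $A$ is the asserted self-morphism. I expect the delicate points to be precisely the $K_{X}\equiv 0\Rightarrow K_{X}\sim_{\Q}0$ upgrade (controlling the $f^{*}$-action on $\Pic^{0}$) and verifying that the lifting of $f^{n}$ genuinely descends through the tower of quasi-\'etale covers produced by the decomposition.
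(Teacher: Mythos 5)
First, a point of reference: the paper never proves this Proposition at all --- it is quoted as a black box from \cite[Theorem 5.2]{meng} and invoked in case (1) of Theorem \ref{thm: main}. So your attempt can only be measured against Meng's proof. Your first half (the assertion $K_{X}\sim_{\Q}0$) is essentially that proof, and it is sound: salience of $\overline{\Eff}(X)$ together with Lemma \ref{lem:eff-anti-canonical} gives $K_{X}\equiv 0$; the ramification formula plus intersection with $H^{n-1}$ gives $R_{f}=0$, hence $f^{*}(mK_{X})\sim mK_{X}$; and the upgrade from $\equiv 0$ to $\sim_{\Q}0$ via the $f^{*}$-action on $\Pic^{0}(X)$ is exactly Meng's mechanism. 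The one ingredient you cite loosely --- that $1$ is not an eigenvalue, so $f^{*}-\id$ is an isogeny of the abelian variety $\Pic^{0}(X)$ --- is a genuine theorem in \cite{meng} (via equivariance of the Albanese map and the fact that int-amplified endomorphisms of abelian varieties have all eigenvalues of modulus $>1$ on $H^{1}$); in a self-contained write-up this would need proof, but it is the correct statement to invoke. You are also right that your abundance shortcut is only an alternative in the klt/lc case: the first assertion assumes merely $\Q$-Gorenstein, so the $\Pic^{0}$ argument is genuinely needed.

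Your second half departs from Meng, who wrote before the singular Beauville--Bogomolov decomposition was available and instead builds on Nakayama--Zhang-type covering and Albanese arguments for quasi-\'etale endomorphisms. Your decomposition route is viable in principle, but it has two genuine gaps. (i) Your stated reason for eliminating the strict Calabi--Yau and irreducible symplectic factors --- that ``any surjective endomorphism is an automorphism, by the nonvanishing holomorphic forms they carry'' --- is not a valid argument: abelian varieties carry nowhere-vanishing forms in every degree and have an abundance of int-amplified endomorphisms, so the existence of such forms cannot be what forbids endomorphisms. What actually kills these factors is: the lifted endomorphism is quasi-\'etale (by the $R_{f}=0$ computation), the CY/IHS factors have \emph{finite} \'etale fundamental group of their smooth locus (Greb--Guenancia--Kebekus), so any quasi-\'etale self-map of such a factor has degree one; and an automorphism is never int-amplified, since $f^{*}$ on $N^{1}$ would have determinant $\pm 1$ while int-amplified forces all eigenvalues to have modulus $>1$. (ii) The lifting of $f^{n}$ through the tower of quasi-\'etale covers, and --- more seriously --- the claim that the lift respects the product decomposition so that Lemma \ref{property-of-intamp} can be applied factor by factor, is not automatic: an endomorphism of a product need not be a product of endomorphisms, and proving that (an iterate of) the lift preserves each factor is precisely where the real work lies in this approach. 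You flag this as delicate but supply no argument, so as it stands the second assertion is not proved.
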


The following easy lemma makes MMP equivariant under certain endomorphisms.

\begin{lem}\label{equiv-contr}
Let $X$ be a lc projective variety and $f \colon X \longrightarrow X$ a surjective endomorphism.
Let $R \subset \NE(X)$ be a $K_{X}$-negative extremal ray and $\pi \colon X \longrightarrow Y$ the contraction of $R$.
Suppose $f_{*}R=R$. Then there exists a surjective endomorphism $Y \longrightarrow Y$ such that
\[
\xymatrix{
X \ar[r]^{f} \ar[d]_{\pi} & X \ar[d]^{\pi}\\
Y \ar[r] & Y.
}
\]
\end{lem}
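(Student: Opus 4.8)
The plan is to exploit the universal property of the extremal contraction: since $\pi \colon X \longrightarrow Y$ is the contraction of the $K_{X}$-negative extremal ray $R$, a morphism out of $X$ factors through $\pi$ precisely when it contracts every curve whose numerical class lies in $R$. Accordingly I would set $g := \pi \circ f \colon X \longrightarrow Y$ and show that $g$ contracts all such curves; the desired endomorphism $Y \longrightarrow Y$ is then the induced factorization, and commutativity of the square is built into the factorization.

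First I would reduce to a statement about individual curves. Let $C \subset X$ be an irreducible curve with $[C] \in R$. As $f$ is surjective, $f(C)$ is either a point or an irreducible curve. If $f(C)$ is a point, then $g(C)=\pi(f(C))$ is trivially a point. If $f(C)$ is a curve, then $f_{*}[C]=d\,[f(C)]$ with $d=\deg(C \to f(C))>0$; the hypothesis $f_{*}R=R$ forces $f_{*}[C]\in R$, and since $R$ is a ray this yields $[f(C)]\in R$, so $\pi$ contracts $f(C)$ and again $g(C)=\pi(f(C))$ is a point. Thus $g$ contracts every curve whose class lies in $R$.

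Next I would upgrade this to ``$g$ is constant on the fibers of $\pi$''. Every curve contained in a fiber of $\pi$ is contracted by $\pi$, hence has class in $R$, and is therefore contracted by $g$; since the fibers of the extremal contraction are connected, $g$ sends each fiber to a single point. Because $\pi$ is a proper surjection with $\pi_{*}\mathcal{O}_{X}=\mathcal{O}_{Y}$, the rigidity lemma produces a unique morphism $f_{Y}\colon Y \longrightarrow Y$ with $f_{Y}\circ \pi = g = \pi \circ f$, which is exactly the asserted commutativity. Finally, $\pi \circ f$ is surjective as a composition of surjections, so $f_{Y}\circ \pi$ is surjective and hence $f_{Y}$ is surjective, giving the required surjective endomorphism.

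I expect the only step requiring genuine care to be the passage from ``$g$ contracts each curve in $R$'' to the existence of the factorizing morphism, that is, the invocation of the universal property (rigidity) of the contraction of a $K_{X}$-negative extremal ray in the lc setting; the numerical bookkeeping with $f_{*}R=R$ is the conceptual heart but is elementary.
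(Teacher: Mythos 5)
Your proof is correct and is essentially the paper's argument spelled out in full: the paper's entire proof is the one-line remark that ``the contraction is determined by the ray,'' which is precisely the universal property (rigidity) of the extremal contraction that you invoke, applied to $\pi \circ f$ after checking via $f_{*}R=R$ that it contracts every curve with class in $R$. The numerical bookkeeping, the use of connected fibers and $\pi_{*}\mathcal{O}_{X}=\mathcal{O}_{Y}$, and the surjectivity of the induced map are all filled in correctly.
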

\begin{proof}
This is true because the contraction is determined by the ray.
\end{proof}

We will use the following lemma to prove kltness.

\begin{lem}\label{lem:fiber-klt}
Consider the following commutative diagram
\[
\xymatrix{
X \ar[r]^{f} \ar[d]_{\pi} & X \ar[d]^{\pi}\\
C \ar[r]_{g}& C
}
\]
where $X$ is a normal projective surface, $C$ is a smooth projective curve, $f$ is an int-amplified endomorphism, $g$ is an endomorphism and 
$\pi$ is a surjective morphism with connected fibers.
Then $X$ is klt.
\end{lem}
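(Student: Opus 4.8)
The plan is to reduce the statement to the non-existence of non-klt points, and then to rule these out by manufacturing from each such point a totally invariant curve to which Lemma~\ref{normality-of-inv-curve} applies. First I would record that $f$ has degree $>1$: for an ample $H$ with $f^{*}H-H$ ample one has $(f^{*}H)^{2}=\deg f\cdot H^{2}$ while $(f^{*}H)^{2}=(H+(f^{*}H-H))^{2}>H^{2}$, so $\deg f>1$. Hence Lemma~\ref{lem:endom-and-sing} applies: $X$ is $\Q$-Gorenstein lc, and after replacing $f$ by a power $f^{n}$ (still int-amplified by Lemma~\ref{property-of-intamp}(1), and still fitting in the diagram with $g^{n}$) every lc center of $(X,0)$ is totally invariant. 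Since $X$ is a normal surface, the lc centers of $(X,0)$ are exactly the non-klt points; so it suffices to prove that this finite set $Z$ is empty, and then $X$ is klt.

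Assume $Z\neq\emptyset$. The next step is to show that $B:=\pi(Z)$ is a nonempty finite subset of $C$ which is totally invariant under $g^{n}$, i.e.\ $g^{-n}(B)=B$. The inclusion $g^{n}(B)=B$ is immediate from $\pi\circ f^{n}=g^{n}\circ\pi$ together with $f^{n}(Z)=Z$ (which follows from $f^{-n}(Z)=Z$ and surjectivity of $f^{n}$). For the reverse inclusion $g^{-n}(B)\subseteq B$ I would use surjectivity again: if $g^{n}(c)=\pi(z)$ for some $z\in Z$, then the set-theoretic identity $f^{n}(\pi^{-1}(c))=\pi^{-1}(g^{n}(c))$ yields $x\in\pi^{-1}(c)$ with $f^{n}(x)=z$, whence $x\in f^{-n}(Z)=Z$ and $c=\pi(x)\in B$. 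Pulling back, the reduced divisor $C_{B}:=\pi^{-1}(B)_{\mathrm{red}}$ is a genuine reduced effective divisor (a finite union of fibers, since $B$ is finite and nonempty) and is totally invariant: $f^{-n}(\pi^{-1}(B))=\pi^{-1}(g^{-n}(B))=\pi^{-1}(B)$.

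Now I would apply Lemma~\ref{normality-of-inv-curve} to $f^{n}$ and $C_{B}$, concluding that $(X,C_{B})$ is a log canonical $\Q$-Gorenstein pair; in particular $K_{X}+C_{B}$, hence $C_{B}$, is $\Q$-Cartier. The contradiction then comes from adjunction. Choose a non-klt point $p\in Z$; being an lc center of $(X,0)$, it is the center of a $\mu$-exceptional prime divisor $E$ on some resolution $\mu\colon X'\to X$ with $a(E,X,0)=-1$. Because $p\in\Supp C_{B}$ and $C_{B}$ is $\Q$-Cartier, one has $\ord_{E}(\mu^{*}C_{B})>0$, so
\[
a(E,X,C_{B})=a(E,X,0)-\ord_{E}(\mu^{*}C_{B})<-1,
\]
contradicting the log canonicity of $(X,C_{B})$. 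Therefore $Z=\emptyset$ and $X$ is klt.

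The main obstacle is the final discrepancy computation: one must be certain that the totally invariant curve $C_{B}$ actually passes through the chosen lc center $p$ and is $\Q$-Cartier there, so that adjoining it strictly pushes the discrepancy below $-1$. Both points are clean once Lemma~\ref{normality-of-inv-curve} is in hand, since it supplies the $\Q$-Gorenstein pair (hence $\Q$-Cartierness of $C_{B}$), and $p\in\pi^{-1}(B)=\Supp C_{B}$ by construction; so the real content is the bookkeeping converting total invariance of $Z$ into total invariance of the fiber-divisor $C_{B}$. I note that, perhaps surprisingly, the plan uses neither the genus of the fibers nor the int-amplifiedness of $g$: the fibration enters only to upgrade the zero-dimensional invariant set $Z$ to a one-dimensional invariant divisor.
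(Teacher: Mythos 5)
Your overall architecture is the same as the paper's: reduce to showing that the finite, totally invariant set $Z$ of lc centers is empty, push it down to the base, pull it back to a totally invariant union of fibers, and contradict Lemma~\ref{normality-of-inv-curve}. (Your endgame differs in a minor way: the paper shows the reduced invariant fiber lies in $\Supp R_f$ and contradicts the clause of Lemma~\ref{lem:endom-and-sing} that lc centers are not contained in $\Supp R_f$, while you contradict lc-ness of the pair $(X,C_B)$ by a discrepancy computation; your ending is correct and even avoids needing $\deg g>1$.) However, there is a genuine gap at the crucial step, namely the total invariance of $B=\pi(Z)$.

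The claimed ``set-theoretic identity'' $f^{n}(\pi^{-1}(c))=\pi^{-1}(g^{n}(c))$ is not an identity: only the inclusion $f^{n}(\pi^{-1}(c))\subseteq\pi^{-1}(g^{n}(c))$ follows formally from the commutative square. The reverse inclusion asserts that every point of the fiber over $g^{n}(c)$ has an $f^{n}$-preimage in the fiber over $c$; surjectivity of $f^{n}$ only produces a preimage somewhere in $\pi^{-1}\bigl(g^{-n}(g^{n}(c))\bigr)$, which can consist of several fibers. Worse, in the one instance where you invoke it the needed containment is logically equivalent to your desired conclusion: since $f^{-n}(z)=\{z\}$, one has $z\in f^{n}(\pi^{-1}(c))$ if and only if $z\in\pi^{-1}(c)$, i.e.\ $c=\pi(z)$. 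So the argument is circular. The statement is nevertheless true, but any proof must use the connectedness of the fibers --- exactly the hypothesis your closing remark claims is not really needed. A correct argument: set $b=\pi(z)$, $F_b=\pi^{-1}(b)$, and induct on the distance, in the dual graph of the connected curve $F_b$, from the set of components containing $z$. Every component of $f^{-n}(D)$ surjects onto $D$ and lies in a single fiber; if $D$ contains $z$, each such component contains a point of $f^{-n}(z)=\{z\}$, hence lies in $F_b$; if $D$ meets a component $D'$ (at a point $Q$) all of whose preimage components already lie in $F_b$, then each component of $f^{-n}(D)$ contains a point of $f^{-n}(Q)\subseteq F_b$, hence lies in $F_b$. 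Connectedness then yields $f^{-n}(F_b)\subseteq F_b$, so $g^{-n}(b)=\{b\}$. With this lemma inserted, the rest of your proof (the invariant divisor $C_B$, Lemma~\ref{normality-of-inv-curve}, and the computation $a(E,X,C_B)=a(E,X,0)-\ord_E(\mu^{*}C_B)<-1$) goes through. For what it is worth, the paper asserts the same invariance of $\pi(P)$ without proof, but it does not rest the claim on a false general identity as your write-up does.
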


\begin{proof}
By Lemma \ref{lem:endom-and-sing}, $X$ is $\Q$-Gorenstein lc and we may assume a lc center $P$ of $X$ is totally invariant under $f$.
Then $\pi(P)$ is totally invarinat and the fibre $F$ of $P$ is also totally invariant.
In particular, since $F_{red} \leq R_f$, we have $P \in \mathrm{Supp}(R_f)$, but this contradicts to Lemma \ref{lem:endom-and-sing}.
\end{proof}

\section{Int-amplified endomorphisms on two dimensional Mori fiber spaces}

\begin{prop}\label{end-mfs-sing}
Consider the following commutative diagram
\[
\xymatrix{
X \ar[r]^{f} \ar[d]_{\pi} & X \ar[d]^{\pi}\\
C \ar[r]_{g}& C
}
\]
where $X$ is a $\Q$-Gorenstein lc projective surface, $f$ is an int-amplified endomorphism,
$\pi$ is a $K_{X}$-negative extremal ray contraction to a smooth projective curve $C$ and $g$ is an endomorphism.
Then

\begin{enumerate}
\item $C$ is isomorphic to $\P^{1}$ or an elliptic curve;
\item \label{elliptic base} If $C$ is an elliptic curve, then $f$ does not have non-empty totally invariant finite set and $X$ is smooth;
\item If $C\simeq \P^{1}$ and $-K_{X}$ is not big, then $f$ does not have non-empty totally invariant finite set.
\end{enumerate}

\end{prop}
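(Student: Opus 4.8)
The plan is to transport the dynamics down to $C$ via $\pi$ and exploit that the induced map on $C$ is again int-amplified. By Lemma \ref{property-of-intamp}(2) the map $g$ is int-amplified, hence $\deg g>1$. Writing $\gamma$ for the genus of $C$, Riemann--Hurwitz gives $2\gamma-2=\deg g\,(2\gamma-2)+\deg R_g$ with $\deg R_g\ge 0$ and $\deg g\ge 2$, which is impossible for $\gamma\ge 2$; thus $\gamma\le 1$ and $C\cong\P^1$ or $C$ is elliptic, proving (1). When $C$ is elliptic, $2\gamma-2=0$ forces $R_g=0$, so $g$ is \'etale of degree $\deg g\ge 2$, a fact I will use below.

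For the invariant-set statements in (2) and (3) I would argue by contradiction, through a common reduction. Suppose $S\subset X$ is a non-empty finite totally invariant set. Since $f$ is surjective $f(S)=S$, so $f$ permutes $S$; after replacing $f$ by a power I may assume $f$ fixes $S$ pointwise, and then $f^{-1}(s)=\{s\}$ for every $s\in S$. Fix such an $s$, put $c=\pi(s)$ (a fixed point of $g$), and let $\Gamma\subset\pi^{-1}(c)$ be the component containing $s$. I claim $g^{-1}(c)=\{c\}$. Note $X$ is klt by Lemma \ref{lem:fiber-klt}, hence Cohen--Macaulay, so the finite morphism $f$ is flat and $f^{-1}(\pi^{-1}(c))=\bigsqcup_{c'\in g^{-1}(c)}\pi^{-1}(c')$ with $f$ of total degree $\deg f$ over $\pi^{-1}(c)$. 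If there were some $c'\in g^{-1}(c)$ with $c'\ne c$, then since $f$ contracts no curve each fibre contributes positive degree, so the degree of $f$ over $\Gamma$ coming from $\pi^{-1}(c)$ alone is strictly less than $\deg f$; hence a component of some $\pi^{-1}(c')$ with $c'\ne c$ maps onto $\Gamma$, and therefore onto $s$, producing a preimage of $s$ outside $\pi^{-1}(c)$ and contradicting $f^{-1}(s)=\{s\}$. Thus $g^{-1}(c)=\{c\}$.

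Granting the reduction, (2) is immediate: if $C$ is elliptic then $g$ is \'etale of degree $\ge 2$, so $|g^{-1}(c)|\ge 2$, contradicting $g^{-1}(c)=\{c\}$; hence no non-empty finite totally invariant set exists. For the smoothness assertion in (2) the plan is to study the finite bad locus $B\subset C$ consisting of the points over which $\pi$ fails to be a smooth $\P^1$-fibration (the fibre is singular, reducible or non-reduced, or $X$ is singular along it). Because $g$ is \'etale one has $g^*c=\sum_{c'\in g^{-1}(c)}c'$ with reduced coefficients, so $f^*\pi^{-1}(c)=\sum_{c'}\pi^{-1}(c')$ is reduced and the combinatorics of degenerate fibres is transported rigidly along $g$; the goal is to deduce that $B$ is totally invariant, whereupon $\deg g\cdot|B|=|g^{-1}(B)|=|B|$ forces $B=\varnothing$ and $X\to C$ is a smooth $\P^1$-bundle, in particular $X$ is smooth. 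I expect the main difficulty of part (2) to lie exactly here: one must control how the klt singular points of $X$ are moved by the finite, and in general ramified, morphism $f$ --- a singular point could a priori map to a smooth one --- in order to see that a bad fibre downstairs comes from a bad fibre upstairs; this should follow by combining the reducedness just noted with the no-contraction/degree bookkeeping of the reduction.

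For (3), $C\cong\P^1$ and $g$ may genuinely possess totally invariant points, so the reduction gives no contradiction by itself and the hypothesis that $-K_X$ is not big has to intervene. From $g^{-1}(c)=\{c\}$ we get $g^*c=(\deg g)\,c$, so $F:=\pi^{-1}(c)$ is totally invariant and $f$ is totally ramified along it, giving $R_f\ge(\deg g-1)F_{\mathrm{red}}$; also $-K_X\equiv E\ge 0$ by Lemma \ref{lem:eff-anti-canonical}. Since the Picard number $\rho(X)=2$ and $f_*$ fixes the ray $[F]$, it also preserves the second extremal ray of $\NE(X)$, which is spanned by an irreducible curve $\Gamma$ with $\Gamma^2<0$ that one checks is totally invariant. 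The plan is to insert the two invariant curves $F,\Gamma$ and the ramification identity $f^*(-K_X)\equiv -K_X+R_f$ into an intersection computation on the two-dimensional N\'eron--Severi space and conclude $(-K_X)^2=E^2>0$; since $E$ is effective this forces $-K_X$ to be big, contradicting the hypothesis and thereby excluding a totally invariant finite set. The main obstacle is this final positivity step: one must show that the totally ramified invariant fibre forces $(-K_X)^2>0$, the numerical feature distinguishing the present situation from the Hirzebruch-type surfaces (where $-K_X$ is big and totally invariant points do occur), and establishing it appears to require a careful Zariski-decomposition analysis rather than a single estimate.
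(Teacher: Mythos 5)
Your part (1) and your reduction to $g^{-1}(c)=\{c\}$ follow the same route as the paper, but both remaining assertions are left as announced ``plans'' exactly at the step where the real work lies, so the proposal has genuine gaps. A secondary flaw first: ``$X$ is Cohen--Macaulay, hence the finite morphism $f$ is flat'' is false --- miracle flatness requires the \emph{target} to be regular, and finite endomorphisms of singular klt surfaces are in general non-flat at singular points. The reduction survives without flatness: fibers of a Mori fiber space over a curve are irreducible (all fiber components are numerically proportional since the relative Picard number is one, and the fiber class has square zero, so distinct components satisfy $D_i\cdot D_j=0$, contradicting connectedness of a reducible fiber); hence for $c'\in g^{-1}(c)$ the curve $f(\pi^{-1}(c'))$ equals the irreducible curve $\pi^{-1}(c)\ni s$, producing a preimage of $s$ outside $\pi^{-1}(c)$.

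The real gap in (2) is smoothness. You correctly guess that one should show the set $B$ of bad fibers satisfies $g^{-1}(B)\subset B$, but the mechanism you are missing is the paper's characterization: \emph{every singular fiber is non-reduced}. Indeed, a generically reduced fiber is integral (it is irreducible as above and Cohen--Macaulay, hence reduced), and then flatness of $\pi$ and $p_a=0$ force it to be $\P^1$. Granting this, propagation is immediate from the divisor identity $f^*\pi^*P=\pi^*g^*P=\sum_{P'\in g^{-1}(P)}\pi^*P'$: if $\pi^*P$ is non-reduced, every coefficient on the left is at least $2$, hence each $\pi^*P'$ is non-reduced; iterating yields $(\deg g)^n$ singular fibers, which is absurd. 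Your intermediate claim that ``$\sum_{c'}\pi^{-1}(c')$ is reduced'' is precisely what cannot be asserted (it is what one must prove), and your worry about singular points of $X$ mapping to smooth points is bypassed entirely by tracking non-reduced fiber divisors rather than singular points; without the ``singular $\Rightarrow$ non-reduced'' step your argument does not close. In (3) you likewise stop before the contradiction: you concede that the positivity step ``appears to require a careful Zariski-decomposition analysis,'' i.e.\ it is not proved, and your auxiliary claims --- that the second ray of $\NE(X)$ is spanned by an irreducible curve with $\Gamma^2<0$ and that this curve is totally invariant --- fail in general (when $\kappa(-K_X)=1$ that ray is spanned by elliptic fibers of square zero, which $f$ can permute). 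The paper's finish is a one-liner available from your own setup: $-K_X$ is pseudo-effective (Lemma \ref{lem:eff-anti-canonical}), not big, and positive on $F$, so it spans the extremal ray of $\overline{\Eff}(X)$ other than $\R_{\geq0}[F]$; since $f^*$ fixes $[F]$ it fixes this ray, so $R_f\equiv K_X-f^*K_X\equiv(q-1)(-K_X)$ is an effective divisor whose class spans that extremal ray, and extremality forces every irreducible component of $R_f$ to have class on the same ray. But you showed the reduced fiber through the totally invariant point is a component of $R_f$, and its class lies on $\R_{\geq0}[F]$ --- contradiction, with no intersection-theoretic computation needed.
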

\begin{proof}
(1) Since $f$ is int-amplified, so is $g$ and that means $\deg g>1$.
This implies $C$ is isomorphic to $\P^{1}$ or an elliptic curve.

(2) Suppose $C$ is an elliptic curve.
Then $g$ is an \'etale non-isomorphic morphism, and therefore $g$ and its iterates have no totally invariant points.
Thus $f$ also does not have non-empty totally invariant finite set. By Lemma \ref{lem:endom-and-sing}, $X$ is klt and $\Q$-factorial.

If $\pi$ has a singular fiber, it is not generically reduced.
Indeed, if a fiber $F$ of $\pi$ is generically reduced, it is integral.
(It is irreducible because $\pi$ is a Mori fiber space over a curve and $X$ is $\Q$-factorial. Every fiber of $\pi$ is Cohen-Macaulay and
thus it is reduced if generically reduced.)
Since $\pi$ is flat and general fibers are $\P^{1}$, the arithmetic genus of $F$ is zero and it implies $F\simeq \P^{1}$.
This is a contradiction.
 
Assume $\pi$ has a singular fiber $F=\pi^{*}P$.
Since $g$ is \'etale,  $(g^{n})^{*}P$ is a reduced divisor but every coefficient of $\pi^{*}(g^{n})^{*}P=(f^{n})^{*}F$ is greater than one for any $n$.
This implies there are infinitely many singular fibers of $\pi$, but this is absurd.
Thus all fibers of $\pi$ are regular and therefore $X$ is smooth.

(3)
Note that by Lemma \ref{lem:fiber-klt}, $X$ is klt and $\Q$-factorial. 
Assume $f$ admits a totally invariant finite set.
Replacing $f$ by its iterate, we may assume $f$ has a totally invariant point.
Since $-K_{X}$ is not big and the Picard number of $X$ is two, $-K_{X}$ generates an extremal ray of
the pseudo-effective cone $ \overline{\Eff}(X)$.
Another ray is generated by the fiber class $F$.
Since $F$ is preserved under $f^{*}$, $-K_{X}$ is also preserved and we write $f^{*}(-K_{X}) \equiv q (-K_{X})$ where $q$ is an integer greater than one.
Then $R_{f}\equiv K_{X}-f^{*}K_{X}\equiv (q-1)(-K_{X})$, i.e. $R_{f}$ generates the extremal ray different than the one generated by $F$.
Now the reduced fiber containing the totally invariant point is contained in the support of $R_{f}$.
This is a contradiction.
\end{proof}

\begin{lem}\label{keylem}
Consider the following commutative diagram
\[
\xymatrix{
X \ar[r]^{f} \ar[d]_{\pi} & X \ar[d]^{\pi}\\
\P^{1} \ar[r]_{g}& \P^{1}
}
\]
where $X$ is a klt projective surface, $f$ is an int-amplified endomorphism,
$\pi$ is a $K_{X}$-negative extremal ray contraction and $g$ is an endomorphism.
Let $R_{f}$ be the ramification divisor of $f$.
If $ \kappa(-K_{X})=0$, then $f^{*}(-K_{X})\sim_{\Q}q(-K_{X})$ for some integer $q>1$,
$(R_{f})_{\rm red}=:C$ is a smooth irreducible curve and the following holds:
\begin{itemize}
\item $C\sim_{\Q}-K_{X}$;
\item $f^{-1}(C)=C$ as sets; 
\item $R_{f}=(q-1)C$ as Weil divisors.
\end{itemize}
\end{lem}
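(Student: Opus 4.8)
The plan is to exploit the very restrictive geometry forced by $\kappa(-K_X)=0$ on a surface of Picard number two. First I would record the ambient structure. Since $X$ is klt it is $\Q$-factorial, so the $K_X$-negative extremal contraction $\pi\colon X\to\P^1$ is a Mori fibre space and $\rho(X)=2$; its general fibre $F$ is a smooth rational curve, and by Tsen's theorem the generic fibre is $\P^1$, so $X$ is a rational surface. Rationality is the technical engine of the argument: it makes numerical and $\Q$-linear equivalence coincide on $N^1(X)_\Q$, which later lets me promote numerical identities to genuine $\Q$-linear ones. By Lemma \ref{lem:eff-anti-canonical} the class $-K_X$ is pseudo-effective, and $\kappa(-K_X)=0$ forces it to lie on the boundary of the two-dimensional cone $\overline{\Eff}(X)$; since $-K_X\cdot F=2\neq 0=F^2$ it is not proportional to $F$, so $\overline{\Eff}(X)=\R_{\geq 0}[F]+\R_{\geq 0}[-K_X]$ with $-K_X$ spanning the extremal ray other than $[F]$.

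Next I would pin down the eigenvalue. Because $\pi\circ f=g\circ\pi$ we get $f^*F=(\deg g)F$, so $f^*$ fixes the ray $[F]$; as $f^*$ is an automorphism of $N^1(X)_\R$ preserving $\overline{\Eff}(X)$, it must also fix the only other extremal ray, giving $f^*(-K_X)\equiv q(-K_X)$. Comparing determinants via $\det(f^*)=\deg f$ and $f^*F=(\deg g)F$ yields $q=\deg f/\deg g=\deg(f|_{\text{general fibre}})\in\Z_{>0}$, and int-amplifiedness (Lemma \ref{property-of-intamp}) forces $q>1$. By rationality $f^*(-K_X)\sim_\Q q(-K_X)$, which is the first assertion. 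Let $D_0$ be the unique effective $\Q$-divisor with $D_0\sim_\Q -K_X$ (unique since $\kappa=0$). Then $f^*D_0$ is effective and $\sim_\Q q(-K_X)$, and as $\kappa(q(-K_X))=0$ the representative is unique, so $f^*D_0=qD_0$; taking supports gives $f^{-1}(\Supp D_0)=\Supp D_0$. Likewise $R_f=K_X-f^*K_X\sim_\Q(q-1)(-K_X)$ is effective with $\kappa=0$, hence $R_f=(q-1)D_0$; in particular $(R_f)_{\mathrm{red}}=\Supp D_0=:C$ and $f^{-1}(C)=C$.

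It remains to prove that $C$ is a smooth irreducible curve with $D_0=C$ reduced, which is the heart of the matter. I would first compute the arithmetic genus: from $K_X+D_0\sim_\Q 0$ adjunction gives $2p_a(D_0)-2=(K_X+D_0)\cdot D_0=0$, so $p_a(D_0)=1$. Writing $D_0=\sum b_iC_i$, each $C_i$ is horizontal ($C_i\cdot F>0$) and $\sum b_i(C_i\cdot F)=D_0\cdot F=2$, leaving only the possibilities of one bisection, a doubled section, or two sections. The doubled-section case is excluded by combining $f^*D_0=qD_0$ (forcing ramification index $q$ along the component) with $R_f=(q-1)D_0$, which yields $q=1$. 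To rule out singular or reducible $C$ I would invoke Lemma \ref{normality-of-inv-curve} for the totally invariant reduced divisor $C$: the pair $(X,C)$ is lc and no lc center meets $\Supp R_f=C$. A node of a component, or an intersection point of two components, is a zero-dimensional lc center lying on $C$, a contradiction; a cusp would even violate log canonicity. Hence $C$ is smooth with pairwise disjoint components, and the two-sections case (which, since $p_a(D_0)=1$, would force the two rational curves to intersect) is impossible. This leaves $C$ a single smooth bisection with $b_1=1$, so $D_0=C$, $C\sim_\Q -K_X$, and $R_f=(q-1)C$; as $p_a(C)=1$ it is in fact an elliptic curve.

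The main obstacle is precisely this last step: upgrading the clean equivalence data to the statement that $(R_f)_{\mathrm{red}}$ is irreducible and smooth. The arithmetic-genus computation and the fibre-degree bound $-K_X\cdot F=2$ narrow the combinatorial possibilities, but the decisive input is Lemma \ref{normality-of-inv-curve}, which prohibits zero-dimensional lc centers on $\Supp R_f$ and thereby forbids any singularity of $C$. I expect the bookkeeping of ramification indices in the excluded cases to be the only genuinely fiddly part.
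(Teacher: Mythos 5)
Your first half is essentially correct and close to the paper's own argument: the identification of the two extremal rays of $\overline{\Eff}(X)$, the eigenvalue $q=\deg f/\deg g>1$, the promotion of numerical to $\Q$-linear equivalence via rationality, the uniqueness of the effective representative $D_{0}\sim_{\Q}-K_{X}$ coming from $\kappa=0$, and the resulting identities $f^{*}D_{0}=qD_{0}$, $R_{f}=(q-1)D_{0}$, $f^{-1}(C)=C$ are all sound. Your use of Lemma \ref{normality-of-inv-curve} to forbid zero-dimensional lc centers contained in $\Supp R_{f}$, hence singular or crossing points of $C$, is a valid (in fact slightly more direct) variant of the paper's route ``no lc center $\Rightarrow$ $(X,C)$ plt $\Rightarrow$ $C$ normal''. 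The genuine gap is in the irreducibility step. Your exclusion of two disjoint sections rests on the formula $2p_{a}(D_{0})-2=(K_{X}+D_{0})\cdot D_{0}=0$, which is only valid if $X$ is smooth (or Gorenstein) along $D_{0}$. On a klt surface adjunction carries a Different term: $(K_{X}+C)|_{C^{\nu}}=K_{C^{\nu}}+\mathrm{Diff}$ with $\mathrm{Diff}\geq 0$, so $(K_{X}+D_{0})\cdot D_{0}=0$ is perfectly compatible with $D_{0}=C_{1}+C_{2}$ being two \emph{disjoint} smooth rational curves, provided $X$ is singular along them (e.g.\ four $A_{1}$ points on each $C_{i}$, contributing $\deg\mathrm{Diff}_{i}=2$). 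This configuration is not hypothetical: on a quotient of a $\P^{1}$-bundle over an elliptic curve by a lift of the inversion preserving the two distinguished sections (possible when the twisting line bundle is $2$-torsion), $(R_{f})_{\rm red}$ is exactly such a pair of disjoint rational curves. There $\kappa(-K_{X})=1$, so the lemma is not contradicted, but your genus argument makes no use of $\kappa=0$ at this step and would apply verbatim to that surface, proving something false. A second, smaller defect: the enumeration ``bisection / doubled section / two sections'' tacitly assumes $D_{0}$ has integer coefficients, whereas a priori its coefficients only lie in $\frac{1}{q-1}\Z$; your own ramification-index bookkeeping, applied to an iterate $f^{N}$ fixing every component (using $R_{f^{N}}=(q^{N}-1)D_{0}$), does show all coefficients equal $1$, but as written the case list is incomplete.

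The missing idea is the one the paper uses, and you already have all its ingredients: $R_{f}$ lies on an extremal ray of $\overline{\Eff}(X)$, so \emph{every} irreducible component of $R_{f}$ lies on that ray; since $X$ is rational, numerical and $\Q$-linear equivalence coincide, so two distinct prime components $C_{1},C_{2}$ would satisfy $C_{1}\sim_{\Q}\mu C_{2}$ for some $\mu\in\Q_{>0}$, and suitable integral multiples would give two distinct effective members of one linear system, forcing $\kappa(-K_{X})\geq 1$ and contradicting $\kappa(-K_{X})=0$. This one-line argument disposes of all reducible configurations at once (disjoint sections, fractional coefficients, and vertical components alike); you established extremality of the ray but used it only to identify the cone, not to prove irreducibility. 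With that substitution, and keeping either your lc-center argument or the paper's plt argument for smoothness, the proof closes.
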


\begin{proof}
Note that $X$ is $\Q$-factorial since it is a klt surface.
Moreover, $\Pic(X)_{\Q}\simeq N^{1}(X)_{\Q}$ since $X$ is rational.
Let $ \overline{\Eff}(X)= \overline{NE}(X)=\R_{\geq0}F+\R_{\geq0}v$ where $F$ is the fiber class.
Note that we have
\[
f^{*}F=\deg g F,\ f^{*}v=qv
\]
for some $q\in \R_{>1}$.
By Lemma \ref{lem:eff-anti-canonical}, we can write $-K_{X}=aF+bv$ in $N^{1}(X)_{\R}$ for some $a, b\geq0$.
Since $\pi$ is a $K_{X}$-negative contraction, we have $0<(-K_{X}\cdot F)=b(v\cdot F)$.
This implies $b>0$ and $(v\cdot F)>0$.
Therefore, $a=0$. Indeed, if $a>0$, $-K_{X}$ is contained in the interior of $ \overline{\Eff}(X)$ and it means $-K_{X}$ is big, which
contradicts to our assumption.
Thus $-K_{X}$ generates an extremal ray, $q$ is an integer, and $f^{*}(-K_{X}) \sim_{\Q} q(-K_{X})$.

Now, since $R_{f}\sim K_{X}-f^{*}K_{X} \sim_{\Q} (q-1)(-K_{X})$, $\kappa(R_{f})=0$ and $R_{f}$ generate the extremal ray of $ \overline{\Eff}(X)$.
This implies $R_{f}$ is irreducible.
Set $C=(R_{f})_{\rm red}$. 
Since $f^{*}R_{f} \sim_{\Q} q R_{f}$ and $\kappa(R_{f})=0$, $f^{-1}(R_{f})=R_{f}$ (in other words, $f^{-1}(C)=C$) as sets.
Thus, by the definition of the ramification divisor, $R_{f}=(q-1)C$.
From this, we get $-K_{X}\sim_{\Q}C$.

Now we apply Lemma \ref{normality-of-inv-curve}.
Since $f$ does not ramify along fibers, there is no totally invariant finite set.
Thus, by Lemma \ref{normality-of-inv-curve}, $(X,C)$ has no lc center.                 
Then $(X,C)$ is plt, in particular, $C$ is normal (cf. \cite[Proposition 5.51]{kolmor}).

\end{proof}

\begin{prop}\label{qetale-cover}
Consider the following commutative diagram
\[
\xymatrix{
X \ar[r]^{f} \ar[d]_{\pi} & X \ar[d]^{\pi}\\
\P^{1} \ar[r]_{g}& \P^{1}
}
\]
where $X$ is a klt projective surface, $f$ is an int-amplified endomorphism,
$\pi$ is a $K_{X}$-negative extremal ray contraction and $g$ is an endomorphism.
Let $R_{f}$ be the ramification divisor of $f$.
If $ \kappa(-K_{X})=0$,
then $(R_{f})_{\rm red}=:C$ is an elliptic curve.
Moreover, let $X'=X \times_{\P^{1}}C$ and $ \widetilde{X}$ be the normalization of $X'_{\rm red}$.
Then 
\begin{itemize}
\item $ \widetilde{ X}$ is smooth;
\item the projection $ \widetilde{\pi} \colon \widetilde{X} \longrightarrow C$ is a Fano contraction of a $K_{ \widetilde{X}}$ negative extremal ray (i.e. $ \widetilde{X}$ is a minimal ruled surface over $C$);
\item the finite morphism $h \colon \widetilde{X} \longrightarrow X$ is quasi-\'etale of degree $2$;
\item  $f$ induces an int-amplified endomorphism on $ \widetilde{X}$:
\end{itemize}
\[
\xymatrix{
\widetilde{X} \ar[r] \ar[rd]_{ \widetilde{\pi}} \ar@/^1pc/[rr]^{h} & X' \ar[d] \ar[r] & X \ar[d]^{\pi}\\
& C \ar[r] & \P^{1}.
}
\]
\end{prop}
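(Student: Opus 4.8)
The plan is to turn the structural information already extracted in Lemma \ref{keylem} into an explicit double cover, to prove that this cover is quasi-\'etale by a local analysis of the fibres of $\pi$, and finally to read off the genus of $C$ from the Iitaka dimension of the anticanonical class upstairs. First I would record what Lemma \ref{keylem} provides: $C=(R_{f})_{\rm red}$ is a smooth irreducible curve with $C\sim_{\Q}-K_{X}$, $f^{-1}(C)=C$, $R_{f}=(q-1)C$, and $(X,C)$ is plt. Intersecting with a general fibre $F$ and using adjunction on $F\simeq\P^{1}$ gives $(C\cdot F)=(-K_{X}\cdot F)=2$, so $\pi|_{C}\colon C\longrightarrow\P^{1}$ is finite of degree $2$, a double section. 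Since $(X,C)$ is plt with $C$ normal, adjunction gives $K_{C}+\mathrm{Diff}_{C}(0)=(K_{X}+C)|_{C}\sim_{\Q}0$ with $\mathrm{Diff}_{C}(0)\geq 0$; taking degrees forces $\deg K_{C}\leq 0$, so the genus of $C$ is at most one. It then remains to exclude genus zero and to produce the cover.

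Next I would form $X'=X\times_{\P^{1}}C$ (with $C\to\P^{1}$ equal to $\pi|_{C}$) and let $\widetilde{X}$ be the normalisation of $X'$; here $X'$ is irreducible because the generic fibre of $\pi$ is a geometrically integral $\P^{1}$, which stays integral after the field extension $k(\P^{1})\subset k(C)$. The first projection is a finite morphism $h\colon\widetilde{X}\longrightarrow X$ of degree $2$, and the second gives a fibration $\widetilde{\pi}\colon\widetilde{X}\longrightarrow C$ with general fibre $\P^{1}$. The lift of $f$ is formal: since $\pi\circ f=g\circ\pi$ and $f^{-1}(C)=C$, the pair $(f,f|_{C})$ preserves $X'$, hence induces $f_{\widetilde X}$ on $\widetilde{X}$ with $h\circ f_{\widetilde X}=f\circ h$; as $f$ is int-amplified and $h$ is generically finite of the same dimension, $f_{\widetilde X}$ is int-amplified by Lemma \ref{property-of-intamp}(3).

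The heart of the argument, and the step I expect to be hardest, is to show that $h$ is quasi-\'etale. By construction $h$ is \'etale over $X\smallsetminus\pi^{-1}(B)$, where $B\subset\P^{1}$ is the branch locus of $\pi|_{C}$, so the only possible ramification of $h$ in codimension one is along a reduced fibre $F_{b,{\rm red}}$ with $b\in B$. At the generic point of such a fibre (where $X$ is regular, being normal) the cover is modelled by $u^{2}=t^{m_{b}}\cdot(\mathrm{unit})$, with $t$ a uniformiser and $m_{b}$ the multiplicity of $\pi^{-1}(b)$; this extension of discrete valuation rings is \'etale exactly when $m_{b}$ is even. Thus quasi-\'etaleness reduces to the claim that every fibre over $B$ is a \emph{double} fibre. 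This is the technical core. I would prove it by a local study at the point $c\in C$ over $b$, where $\pi|_{C}$ has ramification index $2$ and $(C\cdot F_{b})_{c}=2$: the two a priori possibilities are $m_{b}=2$ with $C$ meeting $F_{b,{\rm red}}$ transversally, or $m_{b}=1$ with $C$ tangent to a reduced fibre, and the point is to rule out the latter. The tools available are that $X$ is klt, that $f$ is unramified along the fibres (so multiplicities transform by $m_{b'}=m_{b}/e_{b'}$ along $g$, $e_{b'}$ being the ramification index of $g$), and that $C=(R_{f})_{\rm red}$ is precisely the ramification locus. The $\tfrac12(1,1)$-type quotient singularities forced onto the multiple fibres are exactly what the degree-two cover $h$ resolves, so once even multiplicity is in hand $\widetilde{X}$ is smooth; since all its fibres over $C$ are then irreducible reduced copies of $\P^{1}$, $\widetilde{\pi}$ is a $\P^{1}$-bundle, i.e. a minimal ruled surface, and $\widetilde{\pi}$ is a $K_{\widetilde X}$-negative extremal (Fano) contraction.

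Finally I would pin down the genus of $C$ and conclude. Quasi-\'etaleness gives $K_{\widetilde X}=h^{*}K_{X}$, whence $\kappa(-K_{\widetilde X})=\kappa(h^{*}(-K_{X}))=\kappa(-K_{X})=0$. If $C$ were rational, then $\widetilde{X}$ would be a Hirzebruch surface $\mathbb{F}_{e}$, for which $(-K_{\widetilde X})^{2}=8>0$ and hence $\kappa(-K_{\widetilde X})=2$, a contradiction. Therefore $C$ is not rational, and together with the genus $\leq 1$ bound from adjunction this shows that $C$ is an elliptic curve. At this point all four asserted properties of $\widetilde{X}$ are established: smoothness, the minimal ruled (Fano contraction) structure $\widetilde{\pi}\colon\widetilde{X}\longrightarrow C$, the quasi-\'etaleness of $h$ in degree $2$, and the int-amplified lift $f_{\widetilde X}$ fitting into the stated commutative diagram.
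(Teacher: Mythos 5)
Your reduction of quasi-\'etaleness to a multiplicity statement is correct: since $(C\cdot F)=2$, a fibre over a branch point $b$ of $\pi|_{C}$ satisfies $m_{b}\,(C\cdot F_{b,\mathrm{red}})_{c}=2$, so either $m_{b}=2$ (transverse) or $m_{b}=1$ ($C$ tangent to a reduced fibre), and $h$ is quasi-\'etale exactly when the tangent case never occurs. But this is where your proposal stops being a proof: you call the exclusion of tangency ``the technical core'', list the tools you would use, and then continue as if it were done (``once even multiplicity is in hand''). No argument is ever given, and this step is the entire content of the proposition --- the degree-$2$ claim, the formal lift of $f$, and the genus bound are all routine. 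For comparison, the paper kills this step dynamically: from the identities $R_{h}+h^{*}R_{f}=R_{\widetilde f}+\widetilde f^{*}R_{h}$, $h^{*}R_{f}=(q-1)h^{*}C$ and $\widetilde f^{*}h^{*}C=qh^{*}C$ it deduces $R_{h}\geq \widetilde f^{*}R_{h}$, hence $R_{h}$ is totally invariant under $\widetilde f$ and vertical, so a nonzero $R_{h}$ would give a non-empty finite totally invariant set of $f|_{C}$, impossible because $f|_{C}$ is a non-isomorphic \'etale self-map of an elliptic curve; the case $C\simeq\P^{1}$, where this fails (self-maps of $\P^{1}$ do admit totally invariant points), is excluded first by a separate contradiction, namely that Lemma \ref{keylem} applied to $\widetilde X$ would force $R_{\widetilde f}=h^{*}R_{f}$ to be irreducible although its support $h^{-1}(C)$ has two components. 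Your missing step is in fact fillable with the tools you name: if $C$ is tangent to the reduced fibre over $b$, the rule $m_{b}=e_{b'}m_{b'}$ gives $e_{b'}=1$ for all $b'\in g^{-1}(b)$, and pulling back $F_{b,\mathrm{red}}|_{C}=2P$ by $f|_{C}$ (using that $f$ is unramified along fibres) shows each $b'$ is again a tangency point; the finite set $T$ of tangency points then satisfies $g^{-1}(T)\subseteq T$ and $|g^{-1}(T)|=(\deg g)|T|>|T|$, a contradiction. But no such argument appears in your text, so as written the central claim is unproven.

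The two remaining assertions also rest on unproven claims. Smoothness of $\widetilde X$ is justified only by the sentence that the ``$\tfrac12(1,1)$-type quotient singularities forced onto the multiple fibres are exactly what the degree-two cover $h$ resolves''; that $X$ has precisely this local structure along a double fibre is asserted, not proven. The irreducibility of the fibres of $\widetilde\pi$ --- which you need to conclude that $\widetilde\pi$ is a $\P^{1}$-bundle, hence a Fano contraction --- is read off from the same unproven local model, and it does not follow from smoothness alone: a smooth surface fibred in rational curves can perfectly well have reducible fibres. The paper obtains both facts dynamically: any $K_{\widetilde X}$-negative divisorial contraction over $C$ would be equivariant for an iterate of $\widetilde f$ by \cite[Theorem 4.6]{meng-zhang2} and would produce a totally invariant point of $f|_{C}$, which is absurd, so $\widetilde\pi$ is a Fano contraction; then Proposition \ref{end-mfs-sing}(2) yields smoothness. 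Once quasi-\'etaleness is established you could simply invoke these same two results, which would also repair your concluding step (the Hirzebruch computation excluding $C\simeq\P^{1}$ presupposes exactly the smoothness and minimality you have not yet proven).
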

\begin{proof}
We use the notation in Lemma \ref{keylem}.
The restriction of $f$ on $C$ has degree lager than one, so $C$ is isomorphic to $\P^{1}$ or an elliptic curve.
Note that $\pi|_{C} \colon C \longrightarrow \P^{1}$ is a double cover.

{\bf Step 1}.
We assume $C\simeq \P^{1}$ and deduce contradiction.
Form the following commutative diagram:
\[
\xymatrix{
\widetilde{X} \ar[r] \ar[rd]_{ \widetilde{\pi}} \ar@/^1pc/[rr]^{h} & X' \ar[d] \ar[r] & X \ar[d]^{\pi}\\
& C \ar[r]_{\pi|_{C}} & \P^{1}
}
\]
where $X'=X \times_{C}\P^{1}$ and $ \widetilde{X}$ is the normalization of $(X')_{\rm red}$.
Since $f$ induces an endomorphism of $C$, it induces an endomorphism $ \widetilde{f}$ on $\widetilde{X}$ which is 
int-amplified.
By Lemma \ref{lem:fiber-klt}, $ \widetilde{X}$ is klt.
By Lemma \ref{lem:eff-anti-canonical}, $-K_{ \widetilde{X}}$ is $\Q$-linearly effective 
(note that $ \widetilde{X}$ is rational since general fibers of $ \widetilde{\pi}$ is rational).
Let $R_{h}$ be the ramification divisor of $h$.
Then, by pushing the ramification formula by $h$, we get
\[
(\deg h)(-K_{X}) \sim h_{*}(-K_{ \widetilde{X}})+h_{*}R_{h}.
\]
Since $\kappa(-K_{X})=0$ and $-K_{X}\sim_{\Q} C$, we get $\Supp R_{h} \subset h^{-1}(C)$.
By the construction of $h$, $h$ is not ramified along horizontal divisors.
Thus $R_{h}=0$ and $h$ is quasi-\'etale.
Then we get $R_{ \widetilde{f}}=h^{*}R_{f}$ where $R_{ \widetilde{f}}$ is the ramification divisor of $ \widetilde{f}$.
In particular, $ \widetilde{f}$ does not ramify along curves contracted by $ \widetilde{\pi}$.
This implies $ \widetilde{f}$ does not have totally invariant finite set.
Moreover, if there is a $K_{ \widetilde{X}}$-negative extremal divisorial contraction, 
it is equivariant with respect to some iterate of $ \widetilde{f}$ by \cite[Theorem 4.6]{meng-zhang2}.
Then it produces a totally invariant point of some iterate of $f|_{C}$, which is absurd because $R_{ \widetilde{f}}$ is horizontal.
Therefore, $ \widetilde{\pi}$ is a Fano contraction (Note $K_{ \widetilde{X}} \sim h^{*}K_{X}$ is not nef over $C$).
Since $h_{*}(-K_{ \widetilde{X}}) \sim \deg h (-K_{X})$ and $\kappa(-K_{X})=0$, $\kappa(-K_{ \widetilde{X}})=0$.
Now, we can apply Lemma \ref{keylem} to $ \widetilde{X}$ and $ \widetilde{f}$, and it says $R_{ \widetilde{f}}$ is irreducible.
But $\Supp R_{h}=h^{-1}(C)$ is not irreducible since $\pi|_{C} \colon C \longrightarrow \P^{1}$ has degree two.
This is contradiction.

{\bf Step 2}.
Now we assume $C$ is an elliptic curve.
Form the following commutative diagram as in Step 1:
\[
\xymatrix{
\widetilde{X} \ar[r] \ar[rd]_{ \widetilde{\pi}} \ar@/^1pc/[rr]^{h} & X' \ar[d] \ar[r] & X \ar[d]^{\pi}\\
& C \ar[r]_{\pi|_{C}} & \P^{1}.
}
\]
Since $\pi|_{C}$ is a double cover, $h$ has degree $2$.
As in Step 1, $f$ induces an int-amplified endomorphism $ \widetilde{f}$ on $ \widetilde{X}$ and 
$ \widetilde{X }$ is $\Q$-Gorenstein lc.
Consider the following equations:
\begin{align}
&R_{h}+h^{*}R_{f}=R_{ \widetilde{f}}+ \widetilde{f}^{*}R_{h}; \label{eq:1}\\
&h^{*}R_{f}=(q-1)h^{*}C; \label{eq:2}\\
&{ \widetilde{f}}^{*}h^{*}C=h^{*} f^{*}C=qh^{*}C. \label{eq:3}
\end{align}
By construction, $h^{*}C$ has two components and each coefficient is $1$.
By (\ref{eq:3}), $R_{ \widetilde{f}}-(q-1)h^{*}C\geq0$.
By (\ref{eq:2}), $R_{ \widetilde{f}}-h^{*}R_{f}\geq0$.
By (\ref{eq:1}), $R_{h}- \widetilde{f}^{*}R_{h}\geq 0$, and this implies $R_{h}$ is totally invariant under $f$ as a set.
Since $h$ is not ramified along horizontal curves by construction, every component of $R_{h}$ is contracted by $ \widetilde{\pi}$.
If $R_{h}\neq 0$, $f|_{C}$ has a non-empty totally invariant set.
This is absurd because $f|_{C}$ is \'etale and not isomorphic.
Therefore, we get $R_{h}=0$, i.e. $h$ is quasi-\'etale.
Moreover, if there is a $K_{ \widetilde{X}}$-negative extremal divisorial contraction, 
it is equivariant with respect to some iterate of $ \widetilde{f}$ by \cite[Theorem 4.6]{meng-zhang2}.
This implies there is a totally invariant point of some iterate of $f|_{C}$, but this is absurd.
Thus $ \widetilde{\pi}$ is a Fano contraction.
By Proposition \ref{end-mfs-sing}(2), $ \widetilde{X}$ is smooth.

\end{proof}

\begin{prop}\label{qetale-cover2}
Consider the following commutative diagram
\[
\xymatrix{
X \ar[r]^{f} \ar[d]_{\pi} & X \ar[d]^{\pi}\\
\P^{1} \ar[r]_{g}& \P^{1}
}
\]
where $X$ is a klt projective surface with $\kappa(-K_{X})=1$, $f$ is an int-amplified endomorphism,
$\pi$ is a $K_{X}$-negative extremal ray contraction and $g$ is an endomorphism.
Then there exists a positive integer $n$ and an elliptic curve $E$ on $X$ such that $f^{n}(E)=E$ satisfying the following properties.
Let $X'=X \times_{\P^{1}}E$ and $ \widetilde{X}$ be the normalization of $X'_{\rm red}$.
Then 
\begin{itemize}
\item $ \widetilde{ X}$ is smooth;
\item the projection $ \widetilde{\pi} \colon \widetilde{X} \longrightarrow E$ is a Fano contraction of a $K_{ \widetilde{X}}$ negative extremal ray (i.e. $ \widetilde{X}$ is a minimal ruled surface over $E$);
\item the finite morphism $h \colon \widetilde{X} \longrightarrow X$ is quasi-\'etale;
\item  $f^{n}$ induces an int-amplified endomorphism on $ \widetilde{X}$:
\end{itemize}
\[
\xymatrix{
\widetilde{X} \ar[r] \ar[rd]_{ \widetilde{\pi}} \ar@/^1pc/[rr]^{h} & X' \ar[d] \ar[r] & X \ar[d]^{\pi}\\
& E \ar[r]_{\pi|_{E}} & \P^{1}.
}
\]
\end{prop}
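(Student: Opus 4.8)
The plan is to run the anticanonical analysis of Lemma~\ref{keylem} and the covering construction of Proposition~\ref{qetale-cover}, but now extracting an \emph{elliptic} base $E$ from the anticanonical Iitaka fibration rather than from $(R_{f})_{\mathrm{red}}$. First I would record the numerical picture exactly as in Lemma~\ref{keylem}. Since $X$ is a rational klt surface we have $\overline{\Eff}(X)=\NE(X)=\R_{\geq 0}F+\R_{\geq 0}v$ with $F$ the fibre class, $f^{*}F=(\deg g)F$ and $f^{*}v=qv$ for some $q\in\R_{>1}$; writing $-K_{X}=aF+bv$ and using that $\pi$ is $K_{X}$-negative forces $b>0$ and $(v\cdot F)>0$, while $\kappa(-K_{X})=1$ keeps $-K_{X}$ on the boundary of $\overline{\Eff}(X)$ (it is not big), so $a=0$. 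Hence $-K_{X}$ spans the ray $v$, $q$ is an integer, $f^{*}(-K_{X})\sim_{\Q}q(-K_{X})$ and $R_{f}\sim_{\Q}(q-1)(-K_{X})$ with $\kappa(R_{f})=1$.

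Next I would produce $E$ from the anticanonical Iitaka fibration. For suitable $m$ the mobile part of $|-mK_{X}|$ defines a fibration $\phi\colon X\longrightarrow B$ onto a smooth curve, and $B\simeq\P^{1}$ because $X$ is rational. Since $f^{*}(-K_{X})\sim_{\Q}q(-K_{X})$, the fibration $\phi$ is $f$-equivariant, so $f$ descends to a surjective $\bar g\colon\P^{1}\longrightarrow\P^{1}$ which is int-amplified by Lemma~\ref{property-of-intamp}, hence $\deg\bar g>1$. A general fibre $E_{0}$ satisfies $E_{0}\equiv\lambda v$, so $E_{0}^{2}=0$ and $K_{X}\cdot E_{0}=0$; as a general $E_{0}$ avoids the finitely many singular points of $X$, adjunction gives $p_{a}(E_{0})=1$, i.e.\ $E_{0}$ is elliptic, and $E_{0}\cdot F=\lambda(v\cdot F)>0$ shows $E_{0}$ is a multisection of $\pi$. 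Among the infinitely many $\bar g$-periodic points of $\P^{1}$ I would choose one, $b$, whose fibre $E$ is a smooth elliptic curve, and then enlarge $n$ so that $f^{n}(E)=E$.

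I would then form $X'=X\times_{\P^{1}}E$ and let $\widetilde{X}$ be the normalization of $X'_{\mathrm{red}}$, with $h\colon\widetilde{X}\longrightarrow X$ and $\widetilde{\pi}\colon\widetilde{X}\longrightarrow E$. Exactly as in Proposition~\ref{qetale-cover}, $f^{n}$ induces an int-amplified endomorphism $\widetilde{f}$ on $\widetilde{X}$ (Lemma~\ref{property-of-intamp}), and $\widetilde{X}$ is klt by Lemma~\ref{lem:fiber-klt}. The endomorphism induced on the base is $f^{n}|_{E}$, which is int-amplified on the elliptic curve $E$ by Lemma~\ref{property-of-intamp}(2); hence it is an isogeny-type \'etale map of degree $>1$ and has no non-empty totally invariant finite set. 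Running the relative MMP of $\widetilde{X}$ over $E$, any $K_{\widetilde{X}}$-negative divisorial contraction would be $\widetilde{f}^{\,m}$-equivariant by \cite[Theorem 4.6]{meng-zhang2} and would contract a totally invariant vertical divisor, producing a totally invariant finite subset of $E$, which is impossible. Therefore $\widetilde{\pi}$ is already a Fano contraction, and Proposition~\ref{end-mfs-sing}(\ref{elliptic base}) shows that $\widetilde{X}$ is smooth, a minimal ruled surface over $E$ with all fibres reduced.

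Finally I would prove that $h$ is quasi-\'etale. By construction $h$ is unramified along horizontal divisors, so $R_{h}$ is supported on the fibres of $\widetilde{\pi}$ lying over the ramification points of $\pi|_{E}$, and a local calculation at a point $p\in E$ over a branch point $t\in\P^{1}$ expresses the coefficient of $\widetilde{F}_{p}$ in $R_{h}$ through the ramification index $r_{p}$ of $\pi|_{E}$ and the multiplicity $m_{t}$ of the fibre $\pi^{*}t$. The reducedness of $\widetilde{\pi}^{-1}(p)$ from the previous step yields $m_{t}\mid r_{p}$, and I expect the remaining equality $r_{p}=m_{t}$ to be the crux: it amounts to showing that the anticanonical multisection $E$ ramifies over each multiple fibre to order exactly its multiplicity, i.e.\ that $E$ is an orbifold uniformisation of the base $(\P^{1},\sum_{t}(1-\tfrac{1}{m_{t}})[t])$, which $\kappa(-K_{X})=1$ renders Euclidean. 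This matching, whether read off from the canonical bundle formula or, as in Step~2 of Proposition~\ref{qetale-cover}, from the ramification identity $R_{h}+h^{*}R_{f^{n}}=\widetilde{f}^{*}R_{h}+R_{\widetilde{f}}$ together with the backward-invariance of $\Supp R_{h}$ under $\widetilde{f}$, forces $R_{h}=0$. The genuine obstacle, absent in the $\kappa(-K_{X})=0$ case, is that here there is no single totally invariant anticanonical curve to drive the comparison, so the periodicity $f^{n}(E)=E$ and the multiple-fibre structure of $\pi$ must be exploited directly.
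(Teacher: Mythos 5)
Your construction of the elliptic curve $E$ (semi-ampleness of $-K_{X}$ from the two extremal rays and $(-K_{X})^{2}=0$, the induced fibration $\mu\colon X\longrightarrow \P^{1}$, equivariance, and choice of a periodic smooth fiber) is exactly the paper's, and your treatment of the Fano contraction and smoothness of $\widetilde{X}$ (equivariance of vertical divisorial contractions via \cite[Theorem 4.6]{meng-zhang2}, no totally invariant finite sets on the elliptic base, then Proposition \ref{end-mfs-sing}(2)) also matches the paper, up to a harmless reordering. But there is a genuine gap exactly where you flag one: you never prove $R_{h}=0$. You reduce quasi-\'etaleness to the local matching $r_{p}=m_{t}$ and then say you ``expect'' it to follow either from an orbifold/canonical-bundle-formula argument or from the ramification identity as in Step 2 of Proposition \ref{qetale-cover}; neither route is carried out, and the second one cannot work as stated. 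That Step 2 argument needs the inequality $\widetilde{f}^{*}R_{h}\leq R_{h}$, which there came from $R_{\widetilde{f}}\geq h^{*}R_{f}$, i.e.\ from the fact that $R_{f}=(q-1)C$ with $C$ \emph{totally} invariant and $h^{*}C$ reduced. In the present situation $E$ is only $f^{n}$-invariant (the point $P$ is a periodic point of $g'$, not a totally invariant one), and $R_{f^{n}}$ is spread over varying, non-invariant fibers of $\mu$, so no analogous lower bound on $R_{\widetilde{f}}$ is available. You correctly identify this obstacle, but identifying it is not overcoming it, and it is the heart of the proposition.

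The paper closes this gap with a short global numerical argument that bypasses total invariance altogether. Write $K_{\widetilde{X}}=h^{*}K_{X}+R_{h}$. Since $\widetilde{f}$ is int-amplified and $\widetilde{X}$ is klt, Lemma \ref{lem:eff-anti-canonical} gives an effective $\Q$-Cartier divisor $D\equiv -K_{\widetilde{X}}$. Pushing the ramification formula forward yields $(\deg h)(-K_{X})\equiv h_{*}R_{h}+h_{*}D$. Intersecting with a fiber $E'$ of $\mu$: the left-hand side gives $0$ because $E'\equiv \lambda(-K_{X})$ and $(-K_{X})^{2}=0$, while $E'$ is nef and $h_{*}R_{h}$, $h_{*}D$ are effective, so each term on the right is nonnegative; hence $(h_{*}R_{h}\cdot E')=0$. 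Therefore every component of $h_{*}R_{h}$ has numerical class on the ray $\R_{\geq 0}(-K_{X})$ of $\overline{\Eff}(X)$, hence has positive intersection with the fiber class of $\pi$ and is horizontal for $\pi$; consequently $R_{h}$ has no component contained in a fiber of $\widetilde{\pi}$. Since by the base-change construction $R_{h}$ has no horizontal component either, $R_{h}=0$. This is the missing step; the rest of your proposal is sound and agrees with the paper.
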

\begin{proof}
Since $-K_{X}$ is not big, $-K_{X}$ generates the extremal ray of $ \overline{\Eff}(X)$ other than the one generated by the fiber class of $\pi$ 
(cf. the proof of Lemma \ref{keylem}).
Therefore, we can show $(-K_{X})^{2}\geq0$. 
Since the other extremal ray is $K_{X}$-negative,  we get $(-K_{X})^{2}=0$ (otherwise, $-K_{X}$ is ample, but $\kappa(-K_{X})=1$).
Moreover, $-K_{X}$ is semi-ample because it is $\Q$-linearly equivalent to at least two irreducible effective divisors and has self-intersection $0$.
Let $\mu \colon X \longrightarrow \P^{1}$ be the morphism defined by $-mK_{X}$ for sufficiently divisible $m$.
Since $f$ preserves the ray $\R_{\geq0}(-K_{X})$, it induces a non-inveritble endomorphism $g' \colon \P^{1} \longrightarrow \P^{1}$ such that
\[
\xymatrix{
X \ar[r]^{f} \ar[d]_{\mu}& X \ar[d]^{\mu}\\
\P^{1} \ar[r]_{g'} & \P^{1}
}
\]
is commutative.

Since $g'$ is non-isomorphic, it has infinitely many periodic points (cf. \cite{fak}).
General fibers of $\mu$ are elliptic curves because $(K_{X})^{2}=0$.
Thus, if we replace $f$ by a suitable power, we may assume there exists a point $P\in \P^{1}$ such that
$g'(P)=P$ and $\mu^{-1}(P)=:E$ is an elliptic curve.

Consider the following diagram:
\[
\xymatrix{
\widetilde{X} \ar[r] \ar[rd]_{ \widetilde{\pi}} \ar@/^1pc/[rr]^{h} & X' \ar[d] \ar[r] & X \ar[d]^{\pi}\\
& E \ar[r]_{\pi|_{E}} & \P^{1}.
}
\]
where $X'=X \times_{\P^{1}}E$ and $ \widetilde{X}$ is the normalization of $(X')_{\rm red}$.
Since $E$ is preserved by $f$, it induces an int-amplified endomorphism $ \widetilde{f}$ on $ \widetilde{X}$.
Therefore, $ \widetilde{X}$ is $\Q$-Gorenstein klt by Lemma \ref{lem:fiber-klt}.

First, we prove $h$ is quasi-\'etale.
Let $R_{h}$ be the ramification divisor and fix a canonical divisor $K_{ \widetilde{X}}$ of $ \widetilde{X}$ so that 
$-h^{*}K_{X}=-K_{ \widetilde{X}}+R_{h}$.
By Lemma \ref{lem:eff-anti-canonical}, there exists an effective $\Q$-Cartier divisor $D$ on $ \widetilde{X}$ such that $D\equiv -K_{ \widetilde{X}}$.
Then we get
$-(\deg h)K_{X}\equiv h_{*}R_{h}+h_{*}D$.
For any fiber  $E'$ of $\mu$, we have 
$0=(-(\deg h)K_{X}\cdot E')=(h_{*}R_{h}\cdot E')+(h_{*}D\cdot E')$.
Since $E'$ is nef and $h_{*}R_{h}$, $h_{*}D$ are effective, we get $(h_{*}R_{h}\cdot E')=0$.
Therefore, $h_{*}R_{h}$ has no irreducible component that is contained in a fiber of $\pi$.
Since $h$ is finite, $R_{h}$ also has no  irreducible component that is contained in a fiber of $ \widetilde{\pi}$.
By the construction of $h$, $R_{h}$ has no horizontal component, and hence we get $R_{h}=0$.

By the same argument as in the last part of the proof of Proposition \ref{qetale-cover},
$ \widetilde{\pi}$ is a Fano contraction and $ \widetilde{X}$ is smooth.
\end{proof}

\section{Int-amplified endomorphisms on surfaces with big anti-canonical divisor}

\begin{lem}[\textup{cf.\cite[Theorem 5.5]{brou-gon}}]\label{lem:anti-big-mds}
Let $X$ be a normal $\Q$-factorial rational projective surface with $-K_X$ is big.
Then $X$ is a Mori dream space.
\end{lem}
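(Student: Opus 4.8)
The plan is to reduce to the smooth case, where bigness of the anti-canonical divisor is exactly the hypothesis of a known structure theorem, and then to transport the Mori dream property back to $X$ along a resolution. Concretely, I would take the minimal resolution $\mu\colon Y \longrightarrow X$. As $X$ is a rational surface and $\mu$ is birational, $Y$ is a smooth rational projective surface. Writing $K_{Y} = \mu^{*}K_{X} + \sum_{i} d_{i} E_{i}$ with the $E_{i}$ being $\mu$-exceptional, the defining property of the minimal resolution (that $K_{Y}$ is $\mu$-nef) together with the negativity lemma forces $d_{i} \leq 0$ for all $i$. Hence $-K_{Y} = \mu^{*}(-K_{X}) + \sum_{i} (-d_{i}) E_{i}$ is a sum of a big divisor and an effective divisor, and is therefore big. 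Thus $Y$ is a smooth rational projective surface with $-K_{Y}$ big.

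With this reduction in hand, I would invoke the theorem that a smooth rational projective surface with big anti-canonical divisor is a Mori dream space (this is the smooth core of \cite[Theorem 5.5]{brou-gon}, going back to Testa, V\'arilly-Alvarado, and Velasco), to conclude that $Y$ is a Mori dream space. To descend to $X$ I would then apply Okawa's theorem that the image of a Mori dream space under a surjective morphism of normal projective varieties is again a Mori dream space. Applying this to the (surjective, birational) morphism $\mu\colon Y \longrightarrow X$, and recalling that $X$ is $\Q$-factorial so that it satisfies the standing hypothesis in the definition of a Mori dream space of \cite{huke}, we conclude that $X$ is a Mori dream space.

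The genuinely hard part is the smooth statement used above, and it is worth isolating why bigness of $-K_{Y}$ is the crucial input. On the smooth surface $Y$ the intersection form on $N^{1}(Y)_{\R}$ has signature $(1,\rho-1)$ and $\overline{\Eff}(Y) = \overline{NE}(Y)$; writing the Zariski decomposition $-K_{Y} = P + N$ with $P$ nef and big and $N \geq 0$, one checks via adjunction that any irreducible curve $C$ with $C^{2} < 0$ either lies in $\Supp N$ or satisfies $P\cdot C = 0$, and the latter curves are confined to the finitely many curves contracted by the birational morphism attached to the nef and big divisor $P$. This yields finiteness of negative curves and hence rational polyhedrality of $\overline{\Eff}(Y)$. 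The remaining and most delicate point is the semiampleness of every nef divisor on $Y$, which does not follow formally from nefness and bigness and is where one must use the full force of the structure theory for big rational surfaces; this is the step I would expect to be the main obstacle if one sought a self-contained proof rather than citing \cite{brou-gon}.
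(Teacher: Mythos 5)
Your proof is correct and takes essentially the same route as the paper's own: pull back along the minimal resolution, use the negativity lemma (equivalently, $\mu$-nefness of $K_Y$) to see that $-K_Y$ is big, invoke the Testa--V\'arilly-Alvarado--Velasco theorem that a smooth rational surface with big anti-canonical divisor is a Mori dream space, and descend to $X$ via Okawa's theorem on images of Mori dream spaces. Your final paragraph sketching why the smooth statement is the hard core is accurate commentary but not needed, since that result is cited rather than proved in both your argument and the paper's.
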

\begin{proof}
Take the minimal resolution $\nu \colon Y \longrightarrow X$.
Then, by negativity lemma, we have $-K_{Y}=-\nu^{*}K_{X}+E$ where $E$ is a $\nu$-exceptional effective divisor.
In particular, $-K_{Y}$ is also big.
Since $Y$ is rational, $Y$ is a Mori dream space by \cite[Theorem 1]{tvv}.
By \cite[Theorem 1.1]{okaw}, $X$ is also a Mori dream space.
\end{proof}

\begin{lem}\label{lem:mmp-anti-big}
Let $X$ be a normal projective surface.
Let $f \colon X \longrightarrow X$ be an int-amplified endomorphism.
Suppose we have the following $f$-equivalent MMP:
\begin{align*}
X=X_{1} \longrightarrow \cdots \longrightarrow X_{r} \longrightarrow C 
\end{align*}
where
\begin{itemize}
\item $X_{i} \longrightarrow X_{i+1}$ is the divisorial contraction of a $K_{X_{i}}$-negative extremal ray for $i=1,\dots ,r-1$;
\item $X_{r} \longrightarrow C$ is identity or the Fano contraction of a $K_{X_{r}}$-negative extremal ray.
\end{itemize}
If $-K_{X_r}$ is big and $X_r$ is $\Q$-factorial, then $-K_X$ is also big.
\end{lem}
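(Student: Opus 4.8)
The plan is to induct on $r$, peeling off one contraction at a time. For $r=1$ we have $X=X_r$ and there is nothing to prove. For $r\ge 2$, consider the first step $\phi\colon X_1\to X_2$. The tail $X_2\to\cdots\to X_r\to C$ is an MMP of exactly the same shape, with the \emph{same} endpoint $X_r$ (still $\Q$-factorial with $-K_{X_r}$ big), and it is equivariant for the endomorphism induced on $X_2$, which is int-amplified by Lemma~\ref{property-of-intamp}(2). Hence the inductive hypothesis yields that $-K_{X_2}$ is big, and it remains to prove the single-step statement: if $\phi\colon X_1\to X_2$ contracts a $K_{X_1}$-negative extremal ray with irreducible exceptional curve $E$ and $-K_{X_2}$ is big, then $-K_{X_1}$ is big. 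Writing the discrepancy formula $K_{X_1}=\phi^{*}K_{X_2}+aE$ (all divisors are $\Q$-Cartier by Lemma~\ref{lem:endom-and-sing}), the $K_{X_1}$-negativity of the ray forces $a>0$, while $E^2<0$ and $\phi^{*}(-K_{X_2})\cdot E=0$. Thus $-K_{X_1}=\phi^{*}(-K_{X_2})-aE$, and $-K_{X_1}$ is big if and only if $\mathrm{vol}\bigl(\phi^{*}(-K_{X_2})-aE\bigr)>0$.

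First I would extract the constraints imposed by $f$. By equivariance the exceptional locus is totally invariant, so $f^{-1}(E)=E$ and $f^{*}E=cE$ for an integer $c>1$; comparing this with the conformality relation $f^{*}E\cdot f^{*}E=\deg f\cdot E^2$ gives $c=\sqrt{\deg f}$. The morphism restricts to an endomorphism $f|_{E}\colon E\to E$ of degree $c$, and substituting the adjunction identity for $E$ (with its different) into the Riemann--Hurwitz formula for $f|_{E}$ forces $E\cong\P^{1}$ and yields a relation of the shape $(a+1)(-E^2)\le 2$. In particular the discrepancy $a$ and the self-intersection $E^2$ are tightly coupled, and the image point $Z=\phi(E)\in X_2$ is totally invariant under the induced endomorphism.

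Finally, to establish positivity of the volume I would apply Kodaira's lemma to write $-K_{X_2}\sim_{\Q}A+D$ with $A$ ample and $D\ge 0$. Since $-E$ is $\phi$-ample, $\phi^{*}A-\varepsilon E$ is ample for small $\varepsilon>0$, and then $-K_{X_1}=(\phi^{*}A-\varepsilon E)+\phi^{*}D-(a-\varepsilon)E$ is big provided the discrepancy $a$ stays below the (moving) Seshadri threshold of $-K_{X_2}$ at $Z$ along the valuation $\mathrm{ord}_E$. Comparing $a$ with this local positivity of $-K_{X_2}$ at the totally invariant point $Z$ is the step I expect to be the main obstacle. The purely formal inequalities available (monotonicity of volume under subtracting effective divisors, $\mathrm{vol}(f^{*}(\,\cdot\,))=\deg f\cdot\mathrm{vol}(\,\cdot\,)$, and the invariance of bigness under $f^{*}$) are all scale-balanced and give nothing on their own; the real work is to use the rigidity at the totally invariant point $Z$, together with the relation $(a+1)(-E^2)\le 2$ and $c=\sqrt{\deg f}$, to bound $a$ against the positivity of $-K_{X_2}$ and thereby conclude $\mathrm{vol}(\phi^{*}(-K_{X_2})-aE)>0$. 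Once $-K_{X_1}$ is big, the induction closes.
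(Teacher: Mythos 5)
Your reduction to a single divisorial contraction is structurally fine, but the proof has a genuine gap exactly where you flag ``the main obstacle'': you never establish that the discrepancy $a$ is small enough relative to the positivity of $-K_{X_2}$ at the invariant point $Z$, and nothing in the constraints you derive ($c=\sqrt{\deg f}$, the relation $(a+1)(-E^2)\le 2$, total invariance of $Z$) is shown to imply the volume bound you need. This is not a routine verification that can be waved through: for a general $K$-negative divisorial contraction the one-step statement is simply false. For example, if $X_1$ is $\P^2$ blown up at $9$ general points and $\phi\colon X_1\to X_2$ contracts the ninth exceptional curve, then $-K_{X_2}$ is ample (degree-one del Pezzo) while $\kappa(-K_{X_1})=0$, so $-K_{X_1}$ is not big. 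Hence any correct argument must use the endomorphism quantitatively, which your outline defers rather than carries out.

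The missing idea --- and the mechanism the paper actually uses --- is iteration of the ramification formula, which is precisely \emph{not} scale-balanced in the exceptional directions. Since the exceptional curve is totally invariant and appears in $R_f$ (equivalently, the ramification index $m$ along it is at least $2$), one has $(f^{n})^{*}(-K_{X})\sim -K_{X}+R_f+f^{*}R_f+\cdots+(f^{n-1})^{*}R_f$, and the coefficient of the exceptional curve in this sum is at least $1+m+\cdots+m^{n-1}$, which grows without bound, while the negative contribution from the discrepancies in $-K_{X}\sim_{\Q}\nu^{*}(-K_{X_r})-\sum_i a_iE_i$ is fixed. So for $n\gg 0$ the divisor $(f^{n})^{*}(-K_{X})$ is $\Q$-linearly equivalent to $\nu^{*}(-K_{X_r})$ plus an effective divisor, hence big, and bigness descends along the finite map $f^{n}$ because $\mathrm{vol}((f^{n})^{*}D)=\deg(f^{n})\cdot\mathrm{vol}(D)$. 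Note that the paper runs this argument once, globally, for the composite $\nu\colon X\to X_r$ of all the divisorial contractions, so no induction on $r$ is needed at all. Your dynamical observations point in the right direction, but without this growth-of-ramification mechanism (or an actual proof of the Seshadri-type bound you postulate) the argument does not close.
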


\begin{proof}
Let $\nu \colon X \longrightarrow X_r$ be the composite of the divisorial contractions,
then the all exceptional divisors $E_1 , \dots E_{r-1}$ of $\nu$ are totally invariant and $E_i \leq R_f$ for all $i$ since $f$ is int-amplified (cf. \cite[Lemma 3.12]{meng}). 
Write $-K_{X}\sim_{\Q} \nu^{*}(-K_{X_{r}})+E$ where $E=\sum_{i=1}^{r-1}a_{i}E_{i}$. 
By the ramification formula, we get $(f^{n})^{*}(-K_{X})\sim -K_{X}+(f^{n-1})^{*}R_{f}+\cdots +R_{f}$ for $n>0$.
Since $E_{i}$ are components of $R_{f}$ and totally invariant under $f$,  $E+(f^{n-1})^{*}R_{f}+\cdots +R_{f}$ is effective for large $n$.
Therefore, the divisor
\begin{align*}
(f^{n})^{*}(-K_{X})\sim_{\Q} \nu^{*}(-K_{X_{r}})+E+(f^{n-1})^{*}R_{f}+\cdots +R_{f}
\end{align*}
is big and hence so is $-K_{X}$.

\end{proof}

\begin{prop}[\textup{cf.\cite[Theorem 5.1]{brou-gon}}]\label{mmp-mds} 
Let $X$ be a normal projective surface.
Let $f \colon X \longrightarrow X$ be an int-amplified endomorphism.
Suppose we can run $f$-equivalent MMP:
\begin{align*}
X=X_{1} \longrightarrow \cdots \longrightarrow X_{r} \longrightarrow C 
\end{align*}
where
\begin{itemize}
\item $X_{i} \longrightarrow X_{i+1}$ is the divisorial contraction of a $K_{X_{i}}$-negative extremal ray for $i=1,\dots ,r-1$;
\item $X_{r} \longrightarrow C$ is the Fano contraction of a $K_{X_{r}}$-negative extremal ray;
\item $C$ is a projective line or a point.
\end{itemize}
Suppose $-K_{X_r}$ is big.
If $C$ is a projective line, then $X$ is a Mori dream space.
If $C$ is a point, then $X$ is a Mori dream space or a projective cone of an elliptic curve.
\end{prop}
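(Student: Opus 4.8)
The plan is to funnel every case into the two ready-made lemmas: Lemma~\ref{lem:mmp-anti-big}, which promotes bigness of $-K_{X_r}$ to bigness of $-K_X$ as soon as $X_r$ is $\Q$-factorial, and Lemma~\ref{lem:anti-big-mds}, which turns a $\Q$-factorial rational surface with $-K$ big into a Mori dream space. So the real work is to check that $X$ is $\Q$-factorial and rational with $-K_X$ big, the only exception being a non-$\Q$-factorial $X$, which I will identify as a cone. Throughout I use that the endomorphism induced on $X_r$ is again int-amplified (Lemma~\ref{property-of-intamp}(2)), so that Lemma~\ref{lem:endom-and-sing} and the results of Section~4 apply to $X_r$. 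The whole argument is organised by whether $X_r$ is klt.

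\medskip
\emph{The klt case.} If $C\simeq\P^1$ then $X_r\to C$ is a Fano contraction over a smooth curve, so $X_r$ is klt by Lemma~\ref{lem:fiber-klt} and rational because its general fibre is $\P^1$. If $C$ is a point then $\rho(X_r)=1$ and, $-K_{X_r}$ being big, it is in fact ample (on a surface of Picard number one a big $\Q$-Cartier divisor is ample); assuming in addition that $X_r$ is klt, it is a klt del Pezzo surface of Picard number one and hence rational. In both situations $X_r$ is klt, thus $\Q$-factorial and rational. I would then transport kltness up the birational contraction $\nu\colon X\to X_r$: after replacing $f$ by a power, any lc centre of $X$ is totally invariant and avoids $\Supp R_f$ by Lemma~\ref{lem:endom-and-sing}, while the exceptional divisors $E_i$ of $\nu$ satisfy $E_i\le R_f$; hence such a centre is disjoint from $\Exc(\nu)$, so $\nu$ is an isomorphism near it and identifies it with a klt singularity of $X_r$, which is absurd. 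Therefore $X$ is klt, so $\Q$-factorial, and it is rational since it is birational to $X_r$. Now Lemma~\ref{lem:mmp-anti-big} shows $-K_X$ is big and Lemma~\ref{lem:anti-big-mds} concludes that $X$ is a Mori dream space. This settles the case $C\simeq\P^1$ entirely, and the case $C=\mathrm{pt}$ whenever $X_r$ is klt.

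\medskip
\emph{The non-klt case.} This can occur only when $C$ is a point, so $X_r$ is lc but not klt with $\rho(X_r)=1$ and $-K_{X_r}$ ample. I would first invoke the classification of such surfaces carrying a non-invertible endomorphism (cf.\ \cite[Theorem~5.1]{brou-gon}): $X_r$ is a projective cone over an elliptic curve $E$, with a single simple elliptic singularity at the vertex $v$, and its minimal resolution $Y\to X_r$ is a minimal ruled surface over $E$ contracting an elliptic curve $E_0\cong E$ to $v$. It then remains to prove $r=1$, i.e.\ $X=X_r$. By Lemma~\ref{lem:endom-and-sing} the non-klt point of $X$ is totally invariant and avoids $\Supp R_f\supseteq\Exc(\nu)$, so $\nu$ is an isomorphism near it and the vertex $v$ already lies on $X$. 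Lifting the induced endomorphism of $X_r$ to $Y$, it preserves $E_0$ and the ruling and induces on $E$ an isogeny of degree $>1$, which has no totally invariant point; consequently the induced endomorphism of $X_r$ has no totally invariant point in the smooth locus $X_r\setminus\{v\}$, since such a point would map to a totally invariant point of $E$. But if $r>1$, some component of $\Exc(\nu)$ would be contracted by $\nu$ to exactly such a totally invariant point of $X_r\setminus\{v\}$, a contradiction. Therefore $r=1$ and $X=X_r$ is a projective cone of an elliptic curve.

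\medskip
\emph{Main obstacle.} The klt case is essentially bookkeeping around the two engine lemmas. The genuinely delicate part is the non-klt case: identifying a non-klt surface with $\rho=1$ and $-K$ ample carrying an int-amplified endomorphism as a cone over an elliptic curve (here total invariance of the non-klt locus is what excludes cusp singularities and pins down the simple-elliptic/cone structure), and then excluding extra divisorial contractions through the absence of totally invariant points away from the vertex.
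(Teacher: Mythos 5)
Your klt case is correct and reaches the same two engine lemmas (Lemma \ref{lem:mmp-anti-big} and Lemma \ref{lem:anti-big-mds}) as the paper, though by a more roundabout path: the paper simply applies Lemma \ref{lem:fiber-klt} to the composite $X \longrightarrow X_r \longrightarrow C \simeq \P^1$, which is already a surjective $f$-equivariant morphism with connected fibres, so $X$ itself is klt and no transport-of-kltness argument is needed; your transport argument is nevertheless valid, because lc centres of a surface pair with zero boundary are points, they avoid $\Supp R_f \supseteq \Exc(\nu)$, and discrepancies are local. (Your assertion that a klt del Pezzo surface of Picard number one is rational is also true but stated without proof; the paper gets rationality from its citation.) The genuine divergence is the non-klt case with $C$ a point: the paper disposes of it in one line by citing the last part of the proof of \cite[Theorem 5.1]{brou-gon} \emph{applied to $X$ itself}, concluding directly that $X$ is a cone over an elliptic curve or a rational surface with rational singularities, whereas you invoke that classification only for $X_r$ and then try to prove $r=1$ by hand. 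That plan can be made to work, but as written it has two unproved steps.

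First, the lift of the induced endomorphism of $X_r$ to the minimal resolution $Y$ is asserted, not proved; it does hold here, because any indeterminacy of the rational lift $Y \dashrightarrow Y$ would be resolved by trees of rational curves mapping into the exceptional curve $E_0$, and a rational curve cannot dominate an elliptic curve --- but some such argument (or a citation) is required. Second, and more seriously, the claim that the lift ``induces on $E$ an isogeny of degree $>1$'' is precisely the point that needs proof: the restriction of a finite endomorphism of degree $>1$ to a totally invariant curve can perfectly well have degree $1$ (e.g.\ $(x,y)\mapsto (x,y^2)$ on $\P^1\times\P^1$ restricted to $\P^1\times\{0\}$), so a priori all of $\deg f$ could sit in the fibre direction, in which case $g_E$ would be an automorphism --- and automorphisms do admit totally invariant points, which destroys your contradiction. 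The claim is true, but one must use $E_0^2\neq 0$: writing $f_Y^{*}E_0=aE_0$, the projection formula gives $a^2E_0^2=(\deg f_Y)E_0^2$, hence $a^2=\deg f_Y$, while push-pull gives $\deg f_Y=a\cdot\deg(f_Y|_{E_0})=a\deg g_E$; therefore $\deg g_E=a=\sqrt{\deg f}>1$. With these two points supplied, your non-klt argument is complete and genuinely independent of the Broustet--Gongyo argument on which the paper leans.
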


\begin{proof}
If $C$ is a projective line, then $X$ is klt by  Lemma \ref{lem:fiber-klt}.
In pariticular $X$ is $\Q$-factorial,
and $X$ is a Mori dream space by Lemma \ref{lem:anti-big-mds} and Lemma \ref{lem:mmp-anti-big}.

If $C$ is a point, then $X$ is a projective cone of an elliptic curve or rational surface with rational singularities by the last part in the proof of \cite[Theorem 5.1]{brou-gon}.
If $X$ has rational singularities, then $X$ is $\Q$-factorial by \cite[Theorem 4.6]{bades} and a Mori dream space by Lemma \ref{lem:anti-big-mds} and Lemma \ref{lem:mmp-anti-big}.

\end{proof}

\section{poof of the main theorem}

\begin{proof}[Proof of Theorem \ref{thm: main}]
Let $f \colon X \longrightarrow X$ be an int-amplified endomorphism of normal projective surface.
By Lemma \ref{lem:endom-and-sing}, $X$ is $\Q$-Gorenstein lc.
By \cite[Theorem 2.3.6]{kol-kov}, we can run a MMP for $X$.
By \cite[Theorem 4.6]{meng-zhang2} and Lemma \ref{equiv-contr}, 
if we replace $f$ by a suitable power, every $K_{X}$-negative extremal ray contraction is $f$-equivariant and
the induced morphism on the target is also int-amplified (Lemma \ref{property-of-intamp}).
Therefore, we can repeat this process and get 
\begin{align*}
X=X_{1} \longrightarrow \cdots \longrightarrow X_{r} \longrightarrow C 
\end{align*}
where
\begin{itemize}
\item $p_{i} \colon X_{i} \longrightarrow X_{i+1}$ is the divisorial contraction of a $K_{X_{i}}$-negative extremal ray for $i=1,\dots ,r-1$;
\item $X_{r} \longrightarrow C$ is identity or the Fano contraction of a $K_{X_{r}}$-negative extremal ray.
\end{itemize}
By replacing $f$ by its iterate, we assume $f$ induces int-amplified endomorphisms $f_{i}$ on $X_{i}$.

{\bf (1)} When $K_{X}$ is pseudo-effective, then by Lemma \ref{lem:eff-anti-canonical}, $K_{X}\equiv0$.
By \cite[Theorem 1.2]{gong}, $K_{X}\sim_{\Q}0$ and in particular, $r=1$ and $C=X$.
If $X$ is klt, then $X$ is a Q-abelian variety  by \cite[Theorem 5.2]{meng}.  

{\bf (2)} When $K_{X}$ is not pseudo-effective, then the out put of MMP must be a Fano contraction (cf. \cite[Corollary 1.1.7]{bchm}). 
Note that $p_{i}(\Exc (p_{i}))$ is a non-empty finite set totally invariant under $f_{i+1}$.

(a) If $C$ is an elliptic curve, by Proposition \ref{end-mfs-sing}(2), $f_{r}$ admits no totally invariant finite set.
Therefore, $r=1$ and by Proposition \ref{end-mfs-sing}(2) again, $X=X_{1}$ is smooth.

(b) If $C\simeq \P^{1}$ and $\kappa(-K_{X_{r}})=0$, then $r=1$ and $X$ is klt by Lemma \ref{lem:fiber-klt}.
By Proposition \ref{qetale-cover}, we get a desired quasi-\'etale cover as in the statement.

(c) If $C\simeq \P^{1}$ and $\kappa(-K_{X_{r}})=1$,
then $r=1$ and $X=X_{1}$ is klt by Lemma \ref{lem:fiber-klt}.
By Proposition \ref{qetale-cover2}, we get a desired quasi-\'etale cover as in the statement.

(d) If $C\simeq \P^{1}$ and $\kappa(-K_{X_{r}})=2$, then $X$ is klt by Lemma \ref{lem:fiber-klt} and hence $X$ is a Mori dream space by Proposition \ref{mmp-mds}

(e) If $C$ is a point, then $X_{r}$ has Picard number one and $-K_{X_{r}}$ is ample.
By Proposition \ref{mmp-mds}, $X$ is a Mori dream space or a projective cone of an elliptic curve.
\end{proof}

\section{Examples}

\begin{prop}
The cases Theorem \ref{thm: main} (\ref{k=0case}) (\ref{k=1case}) occur. 
\end{prop}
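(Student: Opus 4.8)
The plan is to realize cases \eqref{k=0case} and \eqref{k=1case} as quasi-\'etale quotients of minimal ruled surfaces over an elliptic curve, each equipped with an explicit int-amplified endomorphism. Fix an elliptic curve $E$ and let $\iota=[-1]\colon E\to E$ be inversion, so that $E\to E/\iota\simeq\P^{1}$ is the degree two quotient branched at the four $2$-torsion points. For $\mathcal{L}\in\Pic^{0}(E)$ put $\mathcal{E}=\mathcal{O}_{E}\oplus\mathcal{L}$ and $Y=\P(\mathcal{E})$, a minimal ruled surface over $E$ carrying two disjoint sections $C_{0},C_{\infty}$ (one for each summand) and a $\mathbb{G}_{m}$-bundle structure on $Y\setminus(C_{0}\cup C_{\infty})$.

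The two cases are separated by the order of $\mathcal{L}$. Since $\omega_{Y}^{-1}=\mathcal{O}_{\P(\mathcal{E})}(2)\otimes\pi^{*}\mathcal{L}^{-1}$, one computes $h^{0}(Y,\omega_{Y}^{-n})=h^{0}(E,\bigoplus_{k=-n}^{n}\mathcal{L}^{k})=\#\{\,k:-n\le k\le n,\ \mathcal{L}^{k}\cong\mathcal{O}_{E}\,\}$, whence $\kappa(-K_{Y})=0$ if $\mathcal{L}$ is non-torsion and $\kappa(-K_{Y})=1$ if $\mathcal{L}$ is trivial or of finite order. As the cover $h$ constructed below is quasi-\'etale, we have $-K_{Y}=h^{*}(-K_{X})$ and hence $\kappa(-K_{X})=\kappa(-K_{Y})$; so a non-torsion $\mathcal{L}$ yields case \eqref{k=0case} and the trivial $\mathcal{L}$ (i.e. $Y=E\times\P^{1}$) yields case \eqref{k=1case}.

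Next I would produce the involution and the endomorphism. Because $\iota^{*}\mathcal{E}=\mathcal{O}_{E}\oplus\mathcal{L}^{-1}\cong\mathcal{E}\otimes\mathcal{L}^{-1}$, the map $\iota$ lifts to an automorphism $\sigma\colon Y\to Y$ exchanging $C_{0}$ and $C_{\infty}$; choosing the lift suitably it is an involution, and over each of the four fixed points of $\iota$ it acts on the fibre $\P^{1}$ by an involution swapping the two marked points, hence with exactly two isolated fixed points. Thus $\mathrm{Fix}(\sigma)$ is finite, $\sigma$ is free in codimension one, and $h\colon Y\to X:=Y/\sigma$ is a quasi-\'etale double cover; $X$ has only quotient singularities, so it is klt, and $Y\to E\to E/\iota$ descends to a fibre-type Fano contraction $X\to\P^{1}$ of Picard number two, so $r=1$ in the resulting MMP. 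For the dynamics, using $[d]^{*}\mathcal{L}\cong\mathcal{L}^{\otimes d}$ for $\mathcal{L}\in\Pic^{0}(E)$, the isogeny $[d]\colon E\to E$ lifts to $f_{Y}\colon Y\to Y$ covering $[d]$, fixing $C_{0},C_{\infty}$ and acting by $z\mapsto z^{d}$ on the $\mathbb{G}_{m}$-fibres. On $N^{1}(Y)=\R\langle C_{0},F\rangle$ the map $f_{Y}^{*}$ is triangular with eigenvalues $d^{2}$ (on the fibre class $F$) and $d$ (on $C_{0}$), both $>1$, so $f_{Y}$ is int-amplified by the numerical criterion of \cite{meng}. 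After replacing $f_{Y}$ by a power we may assume $\sigma f_{Y}=f_{Y}\sigma$, so that $f_{Y}$ descends to an endomorphism $f$ of $X$, which is int-amplified by Lemma \ref{property-of-intamp}(2); this gives exactly the commutative square and quasi-\'etale cover demanded in \eqref{k=0case} and \eqref{k=1case}.

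The main obstacle is the compatibility of $\sigma$ and $f_{Y}$: one must verify that the lift of $\iota$ can be chosen to be a genuine involution and that it commutes with $f_{Y}$, which concretely reduces to the gluing identity $c(x)^{d}=c(dx)$ for the fibrewise constants $c(x)$ describing $\sigma$; this can be arranged, possibly after passing to a power of $f_{Y}$ (harmless by Lemma \ref{property-of-intamp}(1)). The remaining verifications are routine: that $\mathrm{Fix}(\sigma)$ is purely $0$-dimensional (so $h$ is quasi-\'etale, not merely finite), that $\kappa(-K_{X})$ attains the claimed value, and that in the trivial case $Y=E\times\P^{1}$ one may instead take the transparent choice $f_{Y}=[d]\times(z\mapsto z^{d})$ and $\sigma=\iota\times\tau$ with $\tau([u:v])=[u:-v]$, where picking $d$ odd makes $\sigma$ and $f_{Y}$ commute.
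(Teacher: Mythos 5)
Your construction is the same as the paper's own: $Y=\P(\mathcal{O}_E\oplus\mathcal{L})$ with $\mathcal{L}\in\Pic^0(E)$, an involution $\sigma$ on $Y$ lifting $[-1]$, an endomorphism $f_Y$ lifting $[d]$ that acts as $z\mapsto z^d$ on fibres, and $X=Y/\langle\sigma\rangle$; your Iitaka-dimension computation, the quasi-\'etaleness of $h$, and the int-amplified criterion via eigenvalues on $N^1(Y)$ are all correct (indeed more detailed than the paper). The genuine gap is exactly the point you flag and then defer: the compatibility statements $\sigma^2=\id$ and $f_Y\sigma=\sigma f_Y$, which are the entire content of the paper's Lemma \ref{lem:fixing-isoms}, and both of the mechanisms you propose for them fail. (a) Involutivity cannot be achieved by ``choosing the lift suitably'': for non-torsion $\mathcal{L}$ any two lifts of $[-1]$ swapping the two sections differ by a fibrewise scaling $m_{\lambda}\colon z\mapsto\lambda z$, and since $\sigma m_{\lambda}\sigma^{-1}=m_{\lambda^{-1}}$ one gets $(m_{\lambda}\sigma)^2=\sigma^2$; the square is independent of the choice, so one must \emph{prove} that the natural lift squares to the identity. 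The paper does this in Lemma \ref{lem:fixing-isoms}(\ref{inv}) by showing the constant $([-1]^*f)/f$ equals $1$ via evaluation at a $2$-torsion point. (b) Commutation cannot in general be achieved by passing to a power of $f_Y$: writing $\sigma f_Y\sigma^{-1}=m_c\circ f_Y$ (both sides are lifts of $[d]$ fixing both sections), the relation $f_Ym_c=m_{c^d}f_Y$ gives $\sigma f_Y^k\sigma^{-1}=m_{c^{s_k}}f_Y^k$ with $s_k=1+d+\cdots+d^{k-1}$; for $d=2$ every $s_k=2^k-1$ is odd, so if $c$ has even order \emph{no} power of $f_Y$ commutes with $\sigma$. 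The correct fix is to rescale $f_Y$ itself: replacing $f_Y$ by $m_{\lambda}\circ f_Y$ with $\lambda^2=c$ yields a commuting lift, and this rescaling (of the isomorphism $[d]^*\mathcal{L}\simeq\mathcal{L}^{d}$) is precisely the $g\mapsto\sqrt{a}\,g$ step in the paper's Lemma \ref{lem:fixing-isoms}(\ref{isoms}).

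The gap only affects case (\ref{k=0case}), where $\mathcal{L}$ is non-torsion and no coordinate-free shortcut is available. Your fully explicit alternative for case (\ref{k=1case}) --- $Y=E\times\P^1$, $\sigma=\iota\times([u:v]\mapsto[u:-v])$, $f_Y=[d]\times(z\mapsto z^d)$ with $d$ odd --- commutes on the nose, has finite fixed locus, and is correct as stated, so that case is essentially complete. To close case (\ref{k=0case}) you must carry out (a) and (b) above; with the rescaling fix in place of ``passing to a power,'' your reduction to an identity of the shape $c(x)^d=c(dx)$ (more precisely $\alpha(x)\alpha(-x)c(x)^d=c(dx)$, where $\alpha$ describes $f_Y$ fibrewise) is the right computation, and it is exactly what the paper verifies with rational functions on $E$.
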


Let $E$ be an elliptic curve. We write $[m] \colon E \longrightarrow E$ the multiplication by $m$ map for every integer $m$.
Take an invertible $ \mathcal{O}_{E}$-module $ \mathcal{L}$ with $\deg \mathcal{L}=0$.
Consider  the projective bundle $p \colon Y=\P( \mathcal{O}_{E}\oplus \mathcal{L}) \longrightarrow E$.

\begin{lem}\label{lem:fixing-isoms}\ 
\begin{enumerate}
\item \label{inv}
For any isomorphism $\varphi \colon [-1]^{*} \mathcal{L} \longrightarrow \mathcal{L}^{-1}$, we have
\[
\xymatrix{
[-1]^{*}([-1]^{*} \mathcal{L}) \ar[d]_{[-1]^{*}\varphi} \ar[r] & ([-1]\circ [-1])^{*} \mathcal{L}  \ar[r] & \mathcal{L} \\
[-1]^{*}( \mathcal{L}^{-1}) \ar[r] & ([-1]^{*} \mathcal{L})^{-1} \ar[r]_{\varphi^{\vee}} & ( \mathcal{L}^{-1})^{-1} \ar[u]
}
\]
commutative, where unlabeled arrows are canonical isomorphisms.
\item \label{isoms}
Let $n>1$ an integer.
For every isomorphism $\varphi \colon [-1]^{*} \mathcal{L} \longrightarrow \mathcal{L}^{-1}$, 
there exists an isomorphism $\psi \colon [n]^{*} \mathcal{L} \longrightarrow \mathcal{L}^{n}$ such that
the following diagram is commutative:
\[
\xymatrix{
[-1]^{*}([n]^{*} \mathcal{L}) \ar[r]^{[-1]^{*}\psi} \ar[d]  &  [-1]^{*}( \mathcal{L}^{n}) \ar[r]  &  ([-1]^{*} \mathcal{L})^{n} \ar[r]^{\varphi^{\otimes n}}  &  (\mathcal{L}^{-1})^{n} \ar[d]\\
[n]^{*}([-1]^{*} \mathcal{L}) \ar[r]_{[n]^{*}\varphi} & [n]^{*}( \mathcal{L}^{-1}) \ar[r] & ([n]^{*} \mathcal{L})^{-1} \ar[r]_{\psi^{\vee}} & ( \mathcal{L}^{n})^{-1}
}
\]
where unlabeled arrows are canonical isomorphisms.
\end{enumerate}
\end{lem}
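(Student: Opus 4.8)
The plan is to reduce both statements to the observation that, since $E$ is a connected projective variety, $\Aut_{\mathcal{O}_{E}}(\mathcal{M})=H^{0}(E,\mathcal{O}_{E}^{\times})=k^{\times}$ for every line bundle $\mathcal{M}$; hence any two isomorphisms between a fixed pair of isomorphic line bundles on $E$ differ by a unique scalar in $k^{\times}$. The existence of the isomorphisms is automatic: because $\deg\mathcal{L}=0$ we have $\mathcal{L}\in\Pic^{0}(E)$, so $[-1]^{*}\mathcal{L}\cong\mathcal{L}^{-1}$ (providing some $\varphi$), and the theorem of the cube combined with $[-1]^{*}\mathcal{L}\cong\mathcal{L}^{-1}$ gives $[n]^{*}\mathcal{L}\cong\mathcal{L}^{\otimes n}$ (providing some $\psi$). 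In both (\ref{inv}) and (\ref{isoms}) the two composites around the diagram are isomorphisms between the same line bundles, so each diagram commutes up to a single scalar in $k^{\times}$, and the whole problem becomes the computation or normalization of that scalar.

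For (\ref{inv}), I would first check scale-invariance. If $\varphi$ is replaced by $\lambda\varphi$ with $\lambda\in k^{\times}$, then $[-1]^{*}\varphi$ is multiplied by $\lambda$, whereas the induced map $\varphi^{\vee}$ on inverse bundles is multiplied by $\lambda^{-1}$; since the remaining arrows are canonical, the scalar measuring the failure of commutativity is unchanged. Thus it suffices to evaluate this scalar at one point, and I would use the origin $0\in E$, which is fixed by $[-1]$. At the fibre over $0$ the canonical comparison isomorphism $[-1]^{*}[-1]^{*}\mathcal{L}\cong([-1]\circ[-1])^{*}\mathcal{L}=\mathcal{L}$ is the identity by pseudofunctoriality of pullback, and a direct inspection on the one-dimensional fibre $\mathcal{L}_{0}$ shows that the two composites around the diagram coincide; hence the scalar equals $1$ and the diagram commutes for every $\varphi$.

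For (\ref{isoms}), I would start from an arbitrary isomorphism $\psi_{0}\colon[n]^{*}\mathcal{L}\to\mathcal{L}^{\otimes n}$, so that the diagram commutes up to some scalar $\lambda\in k^{\times}$. Tracking the dependence on $\psi_{0}$: the top edge contains $[-1]^{*}\psi$ (scaling by $c$ under $\psi_{0}\mapsto c\psi_{0}$) and the bottom edge contains $\psi^{\vee}$ (scaling by $c^{-1}$), with all other arrows canonical, so $\lambda$ is replaced by $c^{2}\lambda$. Since $k$ is algebraically closed of characteristic zero, I may choose $c$ with $c^{2}=\lambda^{-1}$ and set $\psi=c\psi_{0}$, which makes the diagram commute. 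The role of the hypotheses is exactly here: algebraic closedness supplies the square root, and $\deg\mathcal{L}=0$ guarantees $\mathcal{L}\in\Pic^{0}(E)$ so that $\psi$ exists at all.

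The conceptual content of both parts is light, since everything collapses to a one-dimensional scalar, so the main obstacle is bookkeeping rather than ideas. Concretely, the delicate step is to pin down all the unlabelled canonical isomorphisms (pseudofunctoriality of pullback, the commutation $[-1]\circ[n]=[n]\circ[-1]$, the duality $[m]^{*}(\mathcal{M}^{-1})\cong([m]^{*}\mathcal{M})^{-1}$, and the double-dual identification) and to verify that they are mutually coherent, so that the comparison of the two composites by a single scalar is legitimate and the scaling exponents ($\lambda^{\pm1}$ in (\ref{inv}), $c^{2}$ in (\ref{isoms})) are computed correctly.
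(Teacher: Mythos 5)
Your proposal is correct, and its skeleton matches the paper's: both treat the failure of commutativity as a single constant in $k^{\times}$ (a global unit on a connected projective variety), both prove (\ref{inv}) by evaluating that constant at a fixed point of $[-1]$, and both prove (\ref{isoms}) by noting that the unknown isomorphism enters the diagram twice --- once covariantly, once contravariantly --- so that rescaling $\psi$ by $c$ moves the discrepancy by $c^{2}$, after which a square root in the algebraically closed field $k$ kills it. The implementations differ, though. The paper chooses coordinates: it writes $\mathcal{L}=\mathcal{O}_{E}(x-0)$, encodes $\varphi$ and $\psi$ by rational functions $f,g$ with $[-1]^{*}(x-0)=-(x-0)+\dv f$ and $[n]^{*}(x-0)=n(x-0)+\dv g$, reduces (\ref{inv}) to the identity $([-1]^{*}f)/f=1$ --- proved by evaluating this constant at a $2$-torsion point $z\notin\{0,x,-x\}$, where $f(-z)=f(z)$ --- and reduces (\ref{isoms}) to normalizing the constant $a=[n]^{*}f/(f^{n}g\,[-1]^{*}g)$ by replacing $g$ with $\sqrt{a}\,g$. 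You instead stay intrinsic: for (\ref{inv}) you evaluate on the fibre over the origin, where $[-1]^{*}\varphi$ specializes to $\varphi_{0}$ and the arrow labelled $\varphi^{\vee}$ to the inverse-dual $(\varphi_{0}^{-1})^{*}$, and these cancel under the double-dual identification (on a one-dimensional space the pairing $(v,u)\mapsto\varphi_{0}(v)(u)$ is automatically symmetric); this avoids choosing a divisor representative and the paper's need to dodge the zeros and poles of $f$ when picking the evaluation point. The price is exactly the coherence bookkeeping of the unlabelled canonical isomorphisms, which you correctly flag as the delicate step and which the paper's rational-function computation handles implicitly. One minor logical remark: in (\ref{inv}) what licenses evaluation at a single point is the constancy of the discrepancy, not the scale-invariance in $\varphi$; and since your fibre computation at $0$ works for an arbitrary $\varphi$, the scale-invariance step is in fact redundant.
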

\begin{proof}
We may assume $ \mathcal{L}= \mathcal{O}_{E}(x-0)$ where $0\in E$ is the identity and $x\in E$ is a closed point.
Take any non-zero rational functions $f, g$ on $E$ so that
\begin{align*}
[-1]^{*}(x-0)&=-(x-0)+\dv f\\
 [n]^{*}(x-0)&=n(x-0)+\dv g.
\end{align*}

(\ref{inv})
We can reduce to prove that $([-1]^{*}f)/f=1$.
This function is constant by definition.
Take a two torsion point $z\in E\setminus \{0,x, y\}$, where $y\in E$ is the inverse element of $x$.
Then $(([-1]^{*}f)/f)(z)=f(z)/f(z)=1$.

(\ref{isoms})
We can reduce to find $g$ such that
\[
\frac{[n]^{*}f}{f^{n}g[-1]^{*}g}=1.
\]
The left hand side is constant, say $a$, by the definition of $f$ and $g$.
Replace $g$ by $ \sqrt{a}g$ and we get the desired one.

\end{proof}

Let $n>1$ be an integer.
Fix two isomorphisms $\varphi \colon [-1]^{*} \mathcal{L} \longrightarrow \mathcal{L}^{-1}$, $\psi \colon [n]^{*} \mathcal{L} \longrightarrow \mathcal{L}^{n}$
as in Lemma \ref{lem:fixing-isoms} (\ref{isoms}).

Consider the following diagram:
\[
\xymatrix{
Y \ar@/^20pt/[rrr]^{F} \ar[rd]_{p} \ar[r]_(.3){\alpha}  &  \P( \mathcal{O}_{E} \oplus \mathcal{L}^{n}) \ar[d] \ar[r]^(.45){\Psi}  &   \P( \mathcal{O}_{E} \oplus [n]^{*} \mathcal{L}) \ar[dl]^{[n]^{*}p} \ar[r]_(.7){\beta} & Y \ar[d]^{p}\\
 & E \ar[rr]_{[n]} & & E
}
\]
where $[n]^{*}p$ is the base change of $p$ by $[n]$, $\beta$ is the projection, $\Psi$ is the isomorphism over $E$ induced by $\psi$, and
$\alpha$ is the morphism over $E$ defined by the canonical inclusion $\mathcal{O}_{E}\oplus \mathcal{L}^{n} \longrightarrow \Sym^{n} ( \mathcal{O}_{E}\oplus \mathcal{L})$.
Define $F \colon Y \longrightarrow Y$ to be the composite $F=\beta \circ \Psi \circ \alpha$.
Note that $F$ is an int-amplified endomorphism.

Similarly, consider the following diagram:
\[
\xymatrix{
Y \ar@/^20pt/[rrr]^{\sigma} \ar[rd]_{p} \ar[r]_(.3){ \iota}  &  \P( \mathcal{O}_{E} \oplus \mathcal{L}^{-1}) \ar[d] \ar[r]^(.45){\Phi}  &   \P( \mathcal{O}_{E} \oplus [-1]^{*} \mathcal{L}) \ar[dl]^{[-1]^{*}p} \ar[r]_(.7){\gamma} & Y \ar[d]^{p}\\
 & E \ar[rr]_{[-1]} & & E
}
\]
where $[-1]^{*}p$ is the base change of $p$ by $[-1]$, $\gamma$ is the projection, $\Phi$ is the isomorphism over $E$ induced by $\varphi$, and
$ \iota$ is the isomorphism over $E$ induced by $ \mathcal{O}_{E}\oplus \mathcal{L} \simeq \mathcal{L} \oplus \mathcal{O}_{E} \simeq ( \mathcal{O}_{E}\oplus \mathcal{L}^{-1}) {\otimes} \mathcal{L}$.
Define $\sigma \colon Y \longrightarrow Y$ to be the composite $\sigma=\gamma \circ \Phi \circ \iota$.
Then, by Lemma \ref{lem:fixing-isoms} (\ref{inv}), we get $\sigma \circ \sigma=\id$.
By Lemma \ref{lem:fixing-isoms} (\ref{isoms}), we get $F\circ \sigma =\sigma \circ F$.

Let $X:=Y/\langle \sigma \rangle$ be the quotient of $Y$ by the involution $\sigma$. 
Then $X$ is a projective klt surface and  
we get the following commutative diagram:
\[
\xymatrix{
Y \ar[r]^{h} \ar[d]_{p} & X \ar[d]^{\pi} \\
E \ar[r] & E/\langle [-1] \rangle \simeq \P^{1}
}
\]
where the horizontal arrows are quotient morphisms and $\pi$ is the induced morphism by $p$.
Note that $h$ is quasi-\'etale since the set of fixed points of $\sigma$ is finite.
Since $F\circ \sigma =\sigma \circ F$, $F$ descends to an int-amplified endomorphism $f \colon X \longrightarrow X$.
Also, $[n] \colon E \longrightarrow E$ induces an endomorphism $g \colon \P^{1} \longrightarrow \P^{1}$ 
and the above diagram is equivariant under these endomorphisms.

We have $h^{*}K_{X}\sim K_{Y}$ because $h$ is quasi-\'etale.
Therefore, $\pi$ is a $K_{X}$-negative extremal ray contraction and $\kappa(-K_{X})=\kappa(-K_{Y})$. 
Moreover, $\kappa(-K_{Y})=0$ if $ \mathcal{L}$ is non-torsion in $\Pic^{0}(E)$ and $\kappa(-K_{Y})=1$ if $ \mathcal{L}$ is torsion.
The morphism $f \colon X \longrightarrow X$ is an example of the case Theorem \ref{thm: main} (\ref{k=0case}) or (\ref{k=1case})
depending on whether $ \mathcal{L}$ is non-torsion or not.

\begin{ack}
The authors would like to thank Sho Ejiri, Makoto Enokizono, Takeru Fukuoka, and Kenta Hashizume for answering their questions.
The authors are supported by  the Program for Leading Graduate Schools, MEXT, Japan.
The first author is supported by JSPS Research Fellowship for Young Scientists and KAKENHI Grant Number 18J11260.
\end{ack}

\end{document}